\newtheorem{theorem}{Theorem}[section]
\newtheorem{corollary}[theorem]{Corollary}
\newtheorem{lemma}[theorem]{Lemma}
\newtheorem{proposition}[theorem]{Proposition}
\theoremstyle{definition}
\newtheorem{definition}[theorem]{Definition}
\newtheorem{example}[theorem]{Example}
\newtheorem{remark}[theorem]{Remark}
\numberwithin{equation}{section}
\newcommand{\add}{\operatorname{add}}
\newcommand{\Cok}{\operatorname{Cok}}
\newcommand{\End}{\operatorname{End}}
\newcommand{\Ext}{\operatorname{Ext}}
\newcommand{\Hom}{\operatorname{Hom}}
\newcommand{\Id}{\operatorname{Id}}
\renewcommand{\Im}{\operatorname{Im}}
\newcommand{\Ker}{\operatorname{Ker}}
\newcommand{\proj}{\operatorname{proj}}
\newcommand{\rad}{\operatorname{rad}}
\newcommand{\CA}{\mathcal{A}}
\newcommand{\CC}{\mathcal{C}}
\newcommand{\CE}{\mathcal{E}}
\newcommand{\CI}{\mathcal{I}}
\newcommand{\CP}{\mathcal{P}}
\newcommand{\Endc}{\End_\mathcal{C}}
\newcommand{\Extc}{\Ext_\mathcal{C}^1}
\newcommand{\Homc}{\Hom_\mathcal{C}}
\newcommand{\Homco}{\overline\Hom_\mathcal{C}}
\newcommand{\Homcu}{\underline\Hom_\mathcal{C}}
\newcommand{\radc}{\rad_\mathcal{C}}
\newcommand{\op}{\mathrm{op}}
\newcommand{\set}[1]{\left\{#1\right\}}
\newcommand{\smatrix}[1]{\left(\begin{smallmatrix}#1\end{smallmatrix}\right)}
\begin{document}

\title{The Auslander-Reiten duality via morphisms determined by objects}

\author{Pengjie Jiao}
\address{School of Mathematical Sciences, University of Science and Technology of China, Hefei 230026, PR China}
\email{jiaopjie@mail.ustc.edu.cn}

\author[Jue Le]{Jue Le$^*$}
\thanks{$^*$The corresponding author.}
\address{School of Mathematical Sciences, University of Science and Technology of China, Hefei 230026, PR China}
\email{juele@ustc.edu.cn}

\date{\today}
\subjclass[2010]{18E10, 16G70}
\keywords{exact category, almost split conflation, Auslander-Reiten duality, morphism determined by object}

\begin{abstract}
  Given an exact category $\mathcal{C}$, we denote by $\mathcal{C}_l$ the smallest additive subcategory containing injectives and indecomposable objects which appear as the first term of an almost split conflation.
  We prove that a deflation is right determined by some object if and only if its intrinsic kernel lies in $\mathcal{C}_l$.
  We give characterizations for $\mathcal{C}$ having Auslander-Reiten duality.
\end{abstract}

\maketitle

\section{Introduction}

Let $k$ be a commutative artinian ring and $\check k$ be the minimal injective cogenerator. We denote by $D=\Hom_k(-,\check k)$ the Matlis duality. The categories we consider are $k$-linear Hom-finite Krull-Schmidt and skeletally small.

Recall from \cite{LenzingZuazua2004Auslander} that an abelian category $\CA$ is said to \emph{have Auslander-Reiten duality}, if for any objects $X$ and $Y$, there exist natural isomorphisms
\[
  \overline\Hom_\CA(Y,\tau X)
  \simeq D\Ext_\CA^1(X,Y)
  \simeq \underline\Hom_\CA(\tau^-Y,X).
\]
Here $\tau$ and $\tau^-$ are mutually quasi-inverse equivalences between the stable categories of $\CA$. More generally, the notion of \emph{generalized Auslander-Reiten duality} for an exact category $\CC$ was introduced in \cite[Section~3]{JiaoGeneralized}. We denote by $\CC_r$ the smallest additive subcategory of $\CC$ containing projectives and indecomposable objects which appear as the third term of an almost split conflation, and by $\CC_l$ the smallest additive subcategory containing injectives and indecomposable objects which appear as the first term of an almost split conflation. We still have the mutually quasi-inverse equivalences $\tau$ and $\tau^-$ between stable categories of $\CC_r$ and $\CC_l$.

Auslander introduced the notion of \emph{morphisms determined by objects} in his Philadelphia note \cite{Auslander1978Functors}, which extends Auslander-Reiten theory in some aspects. We study morphisms determined by objects in an exact category $\CC$. We prove that a deflation in $\CC$ is right determined by some object if and only if its intrinsic kernel lies in $\mathcal{C}_l$; see Theorem~\ref{thm:det}. We give characterizations for an object lying in $\CC_r$ via morphisms determined by objects; see Theorem~\ref{thm:C}.

Following \cite{Krause2013Morphisms} and \cite{ChenLe2015note}, we introduce the notions of \emph{having right stably determined deflations} and \emph{having left stably determined inflations}. We show that the conditions ``$\CC$ has Auslander-Reiten duality'', ``$\CC$ has right stably determined deflations'' and ``$\CC$ has left stably determined inflations'' are equivalent; see Theorem~\ref{thm:AR}.

The paper is organized as follows. In Sections~2 we recall some basic properties of morphisms determined by objects. In Section~3 we prove that if  two objects $C$ and $C'$ are isomorphic in the projectively stable category, then a deflation is right determined by $C$ if and only if it is right determined by $C'$. In Section~4 we prove the following existence theorem: given objects $C\in\CC_r$ and $Y\in\CC$, for certain submodules $H$ of $\Homc(C,Y)$, there exists a deflation $\alpha\colon X\to Y$ with $H=\Im\Homc(C,\alpha)$. Sections~5, 6 and 7 are dedicated to the proofs of Theorems~\ref{thm:det}, \ref{thm:C} and \ref{thm:AR}, respectively.

\section{Morphisms determined by objects}

Let $\CC$ be an additive category. We recall some well-known notions. Here, we use the terminologies in \cite[Section~3]{Ringel2013Auslander}.
Let $f\colon X\to Y$ be a morphism and $C$ be an object. We call $f$ \emph{right $C$-determined} (or right determined by $C$) and call $C$ a \emph{right determiner} of $f$, if the following condition is satisfied: each morphism $g\colon T\to Y$ factors through $f$, provided that for each $h\colon C\to T$ the morphism $g\circ h$ factors through $f$. If moreover $C$ is a direct summand of any right determiner of $f$, we call $C$ a \emph{minimal right determiner} of $f$.

\begin{lemma}\label{lem:det.sum}
  Let $f_1\colon X_1\to Y_1$ and $f_2\colon X_2\to Y_2$ be two morphisms. Then for an object $C$, the morphism $\smatrix{f_1 &0\\ 0 &f_2}\colon X_1\oplus X_2\to Y_1\oplus Y_2$ is right $C$-determined if and only if both $f_1$ and $f_2$ are right $C$-determined.
\end{lemma}

\begin{proof}
  For the sufficiency, we assume that $f_1$ and $f_2$ are right $C$-determined. Let $\smatrix{g_1\\ g_2}\colon T\to Y_1\oplus Y_2$ be a morphism such that for each $h\colon C\to T$, there exists some morphism $\smatrix{s_1\\ s_2}\colon C\to X_1\oplus X_2$ satisfying $\smatrix{g_1\\ g_2} h=\smatrix{f_1 &0\\ 0 &f_2} \smatrix{s_1\\ s_2}$. We obtain $g_1\circ h=f_1\circ s_1$ and $g_2\circ h=f_2\circ s_2$. Since $f_1$ and $f_2$ are right $C$-determined, there exist some morphisms $t_1\colon T\to X_1$ and $t_2\colon T\to X_2$ such that $g_1=f_1\circ t_1$ and $g_2=f_2\circ t_2$. The morphism $\smatrix{t_1\\t_2}\colon T\to X_1\oplus X_2$ satisfies $\smatrix{g_1\\ g_2} = \smatrix{f_1 &0\\ 0 &f_2} \smatrix{t_1\\ t_2}$. It follows that $\smatrix{f_1 &0\\ 0 &f_2}$ is right $C$-determined.

  For the necessity, we assume that $\smatrix{f_1 &0\\ 0 &f_2}$ is right $C$-determined. Let $g\colon T\to Y_1$ be a morphism such that for each $h\colon C\to T$, there exists some morphism $s\colon C\to X_1$ satisfying $g\circ h=f_1\circ s$. The morphisms $\smatrix{g\\0}\colon T\to Y_1\oplus Y_2$ and $\smatrix{s\\0}\colon C\to X_1\oplus X_2$ satisfy $\smatrix{g\\ 0} h=\smatrix{f_1 &0\\ 0 &f_2} \smatrix{s\\ 0}$. Since $\smatrix{f_1 &0\\ 0 &f_2}$ is right $C$-determined, there exists some morphism $\smatrix{t_1\\ t_2}\colon T\to X_1\oplus X_2$ such that $\smatrix{g\\ 0} = \smatrix{f_1 &0\\ 0 &f_2} \smatrix{t_1\\ t_2}$. We obtain $g=f_1\circ t_1$. It follows that $f_1$ is right $C$-determined. Similarly, the morphism $f_2$ is right $C$-determined.
\end{proof}

\begin{lemma}\label{lem:det.PB}
  Let $f\colon X\to Y$ and $g\colon Z\to Y$ be two morphisms. Assume that $f'\colon E\to Z$ and $g'\colon E\to X$ form the pullback of $f$ and $g$. If $f$ is right $C$-determined for some object $C$, then $f'$ is also right $C$-determined.
\end{lemma}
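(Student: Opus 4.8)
The plan is a direct diagram chase using the universal property of the pullback together with the defining property of $f$ being right $C$-determined. Recall that the pullback square reads $g\circ f'=f\circ g'$, and that its universal property says: whenever $p\colon T\to Z$ and $q\colon T\to X$ satisfy $g\circ p=f\circ q$, there is a morphism $u\colon T\to E$ with $f'\circ u=p$ and $g'\circ u=q$.

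To verify that $f'$ is right $C$-determined, I would start with an arbitrary morphism $p\colon T\to Z$ with the property that $p\circ h$ factors through $f'$ for every $h\colon C\to T$, and aim to show $p$ itself factors through $f'$. First I would transport this hypothesis along $g$: given $h\colon C\to T$, choose $s\colon C\to E$ with $p\circ h=f'\circ s$; then $(g\circ p)\circ h=g\circ f'\circ s=f\circ(g'\circ s)$, so $(g\circ p)\circ h$ factors through $f$. Since this holds for all $h$ and $f$ is right $C$-determined, the morphism $g\circ p\colon T\to Y$ factors through $f$, say $g\circ p=f\circ q$ for some $q\colon T\to X$.

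Now $p$ and $q$ satisfy $g\circ p=f\circ q$, so the universal property of the pullback yields a morphism $u\colon T\to E$ with $f'\circ u=p$. Thus $p$ factors through $f'$, which is exactly what is needed; hence $f'$ is right $C$-determined.

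There is essentially no serious obstacle here: the argument is a short diagram chase, not unlike the sufficiency part of Lemma~\ref{lem:det.sum}. The only point that requires a little care is bookkeeping of the pullback square's orientation — one must compose with $g$ (not $g'$) to reduce to the hypothesis on $f$, and then feed the resulting factorization $g\circ p=f\circ q$ back through the pullback's universal property in the correct variable to recover a factorization of $p$ through $f'$.
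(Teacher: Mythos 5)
Your argument is correct and coincides with the paper's own proof: both transport the hypothesis along $g$ using the commutativity $g\circ f'=f\circ g'$, invoke the right $C$-determination of $f$ to factor $g\circ p$ through $f$, and then apply the universal property of the pullback to factor $p$ through $f'$. No changes needed.
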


\begin{proof}
  Let $h\colon T\to Z$ be a morphism such that for each $u\colon C\to T$, there exists some morphism $v\colon C\to E$ satisfying $h\circ u=f'\circ v$. Then we have $g\circ h\circ u=g\circ f'\circ v=f\circ g'\circ v$.
  Since $f$ is right $C$-determined, there exists some morphism $s\colon T\to X$ such that $g\circ h=f\circ s$. Since $f'$ and $g'$ form the pullback of $f$ and $g$, there exists a morphism $t\colon T\to E$ such that $h=f'\circ t$. It follows that $f'$ is right $C$-determined.
\end{proof}

Let $f\colon X\to Y$ be a morphism. Recall that $f$ is called \emph{right almost split} if $f$ is not a retraction and each morphism $g\colon Z\to Y$ which is not a retraction factors through $f$. Dually, we call $f$ \emph{left almost split} if $f$ is not a section and each morphism $g\colon X\to Z$ which is not a section factors through $f$.

Given two objects $X$ and $Y$, we denote by $\radc(X,Y)$ the set of morphisms $f\colon X\to Y$, that for any object $Z$ and any morphisms $g\colon Z\to X$ and $h\colon Y\to Z$, the morphism $h\circ f\circ g$ lies in $\rad\Endc(Z)$. Then $\radc$ forms an ideal of $\CC$. We observe by \cite[Corollary~2.10]{Krause2015Krull} that
\begin{equation}\label{eq:rad}
  \radc(X,Y) = \set{ f \colon X \to Y \middle|
    f\circ g\in\rad\Endc(Y),
    \mbox{ for each }
    g\colon Y\to X}.
\end{equation}

A morphism $g\colon Z\to Y$ is said to \emph{almost factor through} $f\colon X\to Y$, if $g$ does not factor through $f$, and for each object $T$ and each morphism $h\in\radc(T,Z)$, the morphism $g \circ h$ factors through $f$.

\begin{proposition}\label{prop:ras}
  Let $f\colon X\to Y$ and $g\colon Z\to Y$ be two morphisms, such that $\Endc(Z)$ is a local ring and $g$ almost factors through $f$. Assume that $f'\colon E\to Z$ is the pullback of $f$ along $g$. Then $f'$ is right almost split.
\end{proposition}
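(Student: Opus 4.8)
The plan is to check directly the two defining properties of a right almost split morphism for $f'\colon E\to Z$: that $f'$ is not a retraction, and that every morphism $h\colon T\to Z$ which is not a retraction factors through $f'$. Throughout I would exploit the pullback square $g\circ f'=f\circ g'$ together with its universal property.

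For the first property, suppose $f'$ were a retraction, with section $s\colon Z\to E$. Then $g=g\circ f'\circ s=f\circ(g'\circ s)$, so $g$ factors through $f$, contradicting the hypothesis that $g$ almost factors through $f$; hence $f'$ is not a retraction. For the second property, the crucial step is to observe that, because $\Endc(Z)$ is local, any $h\colon T\to Z$ which is not a retraction lies in $\radc(T,Z)$: indeed, if $h\circ q$ were invertible for some $q\colon Z\to T$, then $h\circ\bigl(q\circ(h\circ q)^{-1}\bigr)=\Id_Z$ would make $h$ a retraction, so every such $h\circ q$ is a non-unit of the local ring $\Endc(Z)$ and thus lies in $\rad\Endc(Z)$; by the characterization \eqref{eq:rad} this gives $h\in\radc(T,Z)$.

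Now, given a non-retraction $h\colon T\to Z$, the previous paragraph places $h$ in $\radc(T,Z)$, so the assumption that $g$ almost factors through $f$ yields a factorization $g\circ h=f\circ k$ for some $k\colon T\to X$. Applying the universal property of the pullback to this commuting square produces a morphism $t\colon T\to E$ with $f'\circ t=h$, so $h$ factors through $f'$. Together with the first part, this shows that $f'$ is right almost split. I do not anticipate a genuine obstacle; the one point that requires care is converting ``$h$ is not a retraction'' into ``$h\in\radc(T,Z)$'', which uses the full strength of \eqref{eq:rad} (testing against all $q\colon Z\to T$) rather than merely the weaker fact that $h$ is a non-unit in the case $T=Z$.
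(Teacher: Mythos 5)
Your proposal is correct and follows essentially the same route as the paper: rule out $f'$ being a retraction because $g$ does not factor through $f$, use locality of $\Endc(Z)$ together with \eqref{eq:rad} to place any non-retraction $h\colon T\to Z$ in $\radc(T,Z)$, and then combine the almost-factoring hypothesis with the pullback's universal property to factor $h$ through $f'$. The extra details you supply (the explicit section argument and the verification that non-retractions land in $\radc(T,Z)$) are exactly the steps the paper leaves implicit.
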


\begin{proof}
  We observe that $f'$ is not a retraction, since $g$ does not factors through $f$. Given an object $T$, assume that $h\colon T\to Z$ is not a retraction. We observe by (\ref{eq:rad}) that $h\in\radc(T,Z)$, since $\Endc(Z)$ is local. Since $g$ almost factors through $f$, there exists some morphism $s\colon T\to X$ such that $g\circ h = f\circ s$. By the pullback diagram, there exists a morphism $t\colon T\to E$ such that $h=f'\circ t$. It follows that $f'$ is right almost split.
\end{proof}

\section{Deflations determined by objects}

Let $k$ be a commutative artinian ring. We denote by $k$-mod the category of finitely generated $k$-modules. Let $\check{k}$ be the minimal injective cogenerator for $k$. Then the Matlis duality $D=\Hom_k(-,\check{k})$ is a self-duality of $k$-mod. From now on, the categories we consider are $k$-linear Hom-finite and Krull-Schmidt.

Let $\CC$ be an exact category. Recall that an \emph{exact category} is an additive category $\CC$ together with a collection $\CE$ of kernel-cokernel pairs which satisfies the axioms in \cite[Appendix~A]{Keller1990Chain}. Here, a kernel-cokernel pair means a sequence of morphisms $X\xrightarrow{i}E\xrightarrow{d}Y$, which we denote by $(i,d)$, such that $i$ is the kernel of $d$, and $d$ is the cokernel of $i$. Each kernel-cokernel pair $(i,d)\in\CE$ is called a \emph{conflation}, while $i$ is called an \emph{inflation} and $d$ is called a \emph{deflation}.
Given a conflation $\eta\colon X\to E\to Y$, for each $f\colon Z\to Y$ we denote by $\eta.f$ the conflation $\Extc(f,X)(\eta)$; for each $g\colon X\to Z$ we denote by $g.\eta$ the conflation $\Extc(Y,g)(\eta)$.

Let $\alpha_1\colon X_1\to Y_1$ and $\alpha_2\colon X_2\to Y_2$ be two morphisms. We mention the fact that $\smatrix{\alpha_1 &0\\ 0 &\alpha_2}\colon X_1\oplus X_2\to Y_1\oplus Y_2$ is a deflation if and only if both $\alpha_1$ and $\alpha_2$ are deflations;
see \cite[Proposition~2.9 and Corollary~2.18]{Buhler2010Exact}.

Recall from \cite[Section~2]{LenzingZuazua2004Auslander} that a morphism $f\colon X\to Y$ is called \emph{projectively trivial} if for each $Z$, the induced map $\Extc(f,Z)\colon \Extc(Y,Z)\to \Extc(X,Z)$ is zero. We observe that $f$ is projectively trivial if and only if $f$ factors through each deflation ending at $Y$. Dually, we call $f$ \emph{injectively trivial} if for each $Z$, the induced map $\Extc(Z,f)\colon \Extc(Z,X)\to \Extc(Z,Y)$ is zero.

Given two objects $X$ and $Y$, we denote by $\CP(X,Y)$ the set of projectively trivial morphisms from $X$ to $Y$. Then $\CP$ forms an ideal of $\CC$. The \emph{projectively stable category} $\underline\CC$ of $\CC$ is the factor category $\CC/\CP$. Given a morphism $f\colon X\to Y$, we denote by $\underline f$ its image in $\underline\CC$. We denote by $\Homcu(X,Y) = \Homc(X,Y)/\CP(X,Y)$ the set of morphisms from $X$ to $Y$ in $\underline\CC$.

Dually, we denote by $\CI(X,Y)$ the set of injectively trivial morphisms from $X$ to $Y$. The \emph{injectively stable category} $\overline\CC$ of $\CC$ is the factor category $\CC/\CI$. Given a morphism $f\colon X\to Y$, we denote by $\overline f$ its image in $\overline\CC$. We denote by $\Homco(X,Y)=\Homc(X,Y)/\CI(X,Y)$ the set of morphisms from $X$ to $Y$ in $\overline\CC$.

\begin{lemma}\label{lem:ft.stable}
  Let $\alpha\colon X \to Y$ be a deflation. Then a morphism $f\colon T\to Y$ factors through $\alpha$ in $\CC$ if and only if $\underline f$ factors through $\underline\alpha$ in $\underline\CC$.
\end{lemma}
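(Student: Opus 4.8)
The plan is to prove the two implications separately, noting that the forward direction is trivial and the content lies entirely in the converse. If $f = \alpha \circ g$ in $\CC$ for some $g \colon T \to X$, then passing to $\underline\CC$ immediately gives $\underline f = \underline\alpha \circ \underline g$, so that direction needs no work beyond this observation.

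For the converse, suppose $\underline f$ factors through $\underline\alpha$, say $\underline f = \underline\alpha \circ \underline g$ for some $g \colon T \to X$. Then in $\CC$ the difference $f - \alpha \circ g$ lies in $\CP(T,Y)$, i.e.\ it is projectively trivial. By the characterization recorded just before the lemma, a projectively trivial morphism into $Y$ factors through every deflation ending at $Y$; in particular $f - \alpha \circ g$ factors through the deflation $\alpha$ itself, say $f - \alpha \circ g = \alpha \circ h$ for some $h \colon T \to X$. Then $f = \alpha \circ (g + h)$, so $f$ factors through $\alpha$ in $\CC$, as desired.

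The only genuine point to be careful about is the use of the stated fact that ``$f$ is projectively trivial if and only if $f$ factors through each deflation ending at $Y$''; I would make sure this is applied to the deflation $\alpha$ (which ends at $Y$ by hypothesis) and not to some abstract deflation. Beyond that the argument is a direct unwinding of the definition of $\CP$ and of the projectively stable category, so I do not anticipate any real obstacle; the lemma is essentially a bookkeeping statement that lets one move factorization questions about deflations back and forth between $\CC$ and $\underline\CC$.
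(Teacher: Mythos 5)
Your proposal is correct and follows exactly the paper's own argument: the forward direction is immediate, and for the converse one notes that $f-\alpha\circ g$ is projectively trivial, hence factors through the deflation $\alpha$, giving $f=\alpha\circ(g+h)$. Your cautionary remark about applying the characterization of projectively trivial morphisms to $\alpha$ itself is precisely how the paper uses it, so nothing further is needed.
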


\begin{proof}
  It is sufficient to show the sufficiency. Assume that $\underline f$ factors through $\underline\alpha$ in $\underline\CC$. Then there exists some morphism $g\colon T\to X$ in $\CC$ such that $\underline f = \underline\alpha\circ\underline g$ in $\underline\CC$. We have that $f-\alpha\circ g$ is projectively trivial in $\CC$. Since $\alpha$ is a deflation, there exists some morphism $h\colon T\to X$ such that $f-\alpha\circ g=\alpha\circ h$ in $\CC$. It follows that $f=\alpha\circ(g+h)$, factoring through $\alpha$ in $\CC$.
\end{proof}

\begin{lemma}\label{lem:det.stable}
  Let $C$ be an object. Then a deflation $\alpha\colon X \to Y$ is right $C$-determined in $\CC$ if and only if $\underline\alpha$ is right $C$-determined in $\underline\CC$.
\end{lemma}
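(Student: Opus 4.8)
The plan is to prove the equivalence by transferring the determiner condition back and forth across the projectively stable quotient, using Lemma~\ref{lem:ft.stable} as the crucial bridge. The key observation is that for a deflation $\alpha$, ``$g$ factors through $\alpha$ in $\CC$'' and ``$\underline g$ factors through $\underline\alpha$ in $\underline\CC$'' are equivalent conditions; moreover any factorization of $\underline g$ in $\underline\CC$ lifts to a genuine factorization in $\CC$. This means the defining universal property of right $C$-determined — namely, $g$ factors through $\alpha$ whenever $g\circ h$ factors through $\alpha$ for all $h\colon C\to T$ — can be phrased entirely in terms of factorizations, which Lemma~\ref{lem:ft.stable} lets us pass between $\CC$ and $\underline\CC$.

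For the necessity, suppose $\alpha$ is right $C$-determined in $\CC$. Take $\underline g\colon T\to Y$ in $\underline\CC$ such that $\underline g\circ\underline h$ factors through $\underline\alpha$ for every $\underline h\colon C\to T$ in $\underline\CC$. Lift $\underline g$ to some $g\colon T\to Y$ in $\CC$. For each $h\colon C\to T$ in $\CC$, the composite $\underline g\circ\underline h=\underline{g\circ h}$ factors through $\underline\alpha$, so by Lemma~\ref{lem:ft.stable} the morphism $g\circ h$ factors through $\alpha$ in $\CC$. Since $\alpha$ is right $C$-determined in $\CC$, $g$ factors through $\alpha$ in $\CC$, hence $\underline g$ factors through $\underline\alpha$ in $\underline\CC$. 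Thus $\underline\alpha$ is right $C$-determined in $\underline\CC$. For the sufficiency, suppose $\underline\alpha$ is right $C$-determined in $\underline\CC$. Take $g\colon T\to Y$ in $\CC$ such that $g\circ h$ factors through $\alpha$ for every $h\colon C\to T$ in $\CC$. Given any $\underline h\colon C\to T$ in $\underline\CC$, lift it to $h\colon C\to T$ in $\CC$; then $g\circ h$ factors through $\alpha$ in $\CC$, so by Lemma~\ref{lem:ft.stable} (the easy direction) $\underline g\circ\underline h=\underline{g\circ h}$ factors through $\underline\alpha$ in $\underline\CC$. Since $\underline\alpha$ is right $C$-determined in $\underline\CC$, $\underline g$ factors through $\underline\alpha$, and then Lemma~\ref{lem:ft.stable} gives that $g$ factors through $\alpha$ in $\CC$. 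Hence $\alpha$ is right $C$-determined in $\CC$.

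The one subtlety worth stating carefully — and the only real ``obstacle'' — is making sure the quantifier over morphisms $h\colon C\to T$ matches correctly on the two sides. In $\underline\CC$ the test morphisms range over $\Homcu(C,T)$, while in $\CC$ they range over $\Homc(C,T)$; the point is that the surjection $\Homc(C,T)\twoheadrightarrow\Homcu(C,T)$ together with Lemma~\ref{lem:ft.stable} ensures that ``$g\circ h$ factors through $\alpha$ for all $h$ in $\CC$'' is genuinely equivalent to ``$\underline g\circ\underline h$ factors through $\underline\alpha$ for all $\underline h$ in $\underline\CC$''. Everything else is a routine unwinding of definitions, so the proof is short.
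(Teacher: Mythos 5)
Your proof is correct and follows essentially the same route as the paper: both directions are handled by unwinding the definition of right $C$-determined and using Lemma~\ref{lem:ft.stable} to pass factorizations through a deflation between $\CC$ and $\underline\CC$, with the fullness of the quotient functor matching the quantifiers over test morphisms $C\to T$. Your explicit remark about lifting morphisms along $\Homc(C,T)\twoheadrightarrow\Homcu(C,T)$ is exactly the point the paper's proof uses implicitly.
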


\begin{proof}
  For the sufficiency, we assume that $\underline\alpha$ is right $C$-determined in $\underline\CC$. Let $f\colon T\to Y$ be a morphism in $\CC$ such that for each $g\colon C\to T$, the morphism $f\circ g$ factors through $\alpha$ in $\CC$. Then $\underline f \circ \underline g$ factors through $\underline\alpha$ in $\underline\CC$. Since $\underline\alpha$ is right $C$-determined in $\underline\CC$, we have that $\underline f$ factors through $\underline\alpha$ in $\underline\CC$. By Lemma~\ref{lem:ft.stable}, the morphism $f$ factors through $\alpha$ in $\CC$. It follows that $\alpha$ is right $C$-determined in $\CC$.

  For the necessity, we assume that $\alpha$ is right $C$-determined in $\CC$. Let $f\colon T\to Y$ be a morphism in $\CC$ such that for each $g\colon C\to T$, the morphism $\underline{f} \circ \underline{g}$ factors through $\underline\alpha$ in $\underline\CC$. By Lemma~\ref{lem:ft.stable}, the morphism $f\circ g$ factors through $\alpha$ in $\CC$. Since $\alpha$ is right $C$-determined in $\CC$, we have that $f$ factors through $\alpha$ in $\CC$. Then $\underline f$ factors through $\underline\alpha$ in $\underline\CC$. It follows that $\underline\alpha$ is right $C$-determined in $\underline\CC$.
\end{proof}

\begin{proposition}\label{prop:det}
  Let $C$ and $C'$ be two objects. Assume that $C\simeq C'$ in $\underline\CC$. Then a deflation $\alpha\colon X \to Y$ is right $C$-determined if and only if it is right $C'$-determined.
\end{proposition}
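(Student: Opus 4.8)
The plan is to reduce the statement to the projectively stable category $\underline\CC$, where $C$ and $C'$ literally become isomorphic, and there exploit the fact that being right determined by an object depends only on the isomorphism class of that object. By the symmetry of the hypothesis $C\simeq C'$ in $\underline\CC$, it suffices to prove one implication, say that if $\alpha$ is right $C$-determined then $\alpha$ is right $C'$-determined.

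First I would record the elementary observation, valid in any additive category, that if $C\simeq C''$ and $f\colon X\to Y$ is right $C$-determined, then $f$ is right $C''$-determined. Indeed, fix an isomorphism $\varphi\colon C''\to C$ with inverse $\psi$, and let $g\colon T\to Y$ be such that $g\circ h$ factors through $f$ for every $h\colon C''\to T$. For an arbitrary $h'\colon C\to T$ we have $h'\circ\varphi\colon C''\to T$, so $g\circ h'\circ\varphi$ factors through $f$; composing with $\psi$ gives that $g\circ h'=(g\circ h'\circ\varphi)\circ\psi$ factors through $f$. Since this holds for all $h'\colon C\to T$ and $f$ is right $C$-determined, $g$ factors through $f$; hence $f$ is right $C''$-determined.

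Now to the proposition itself. Assume $\alpha\colon X\to Y$ is a deflation which is right $C$-determined in $\CC$. By Lemma~\ref{lem:det.stable}, $\underline\alpha$ is right $C$-determined in $\underline\CC$. Since $C\simeq C'$ in $\underline\CC$, the observation of the previous paragraph, applied in the additive category $\underline\CC$, shows that $\underline\alpha$ is right $C'$-determined in $\underline\CC$. Applying Lemma~\ref{lem:det.stable} once more, this time with the object $C'$, we conclude that $\alpha$ is right $C'$-determined in $\CC$. The reverse implication is obtained by exchanging the roles of $C$ and $C'$.

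There is no real obstacle here; the only point that needs care is that the passage between $\CC$ and $\underline\CC$ is legitimate, and this is exactly what Lemma~\ref{lem:det.stable} provides, using that $\alpha$ is a deflation. If one preferred to avoid Lemma~\ref{lem:det.stable}, one could argue directly in $\CC$ by combining the isomorphism-invariance argument with Lemma~\ref{lem:ft.stable} to handle the factorizations up to projectively trivial morphisms, but the stable-category route is shorter.
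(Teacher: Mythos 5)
Your proposal is correct and follows essentially the same route as the paper: pass to $\underline\CC$ via Lemma~\ref{lem:det.stable}, use that being right determined depends only on the isomorphism class of the determining object, and apply Lemma~\ref{lem:det.stable} again. The only difference is that you spell out the isomorphism-invariance argument that the paper simply records as an observation, which is a harmless elaboration.
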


\begin{proof}
  We observe that $\underline\alpha$ is right $C$-determined in $\underline\CC$ if and only if $\underline\alpha$ is right $C'$-determined in $\underline\CC$. Then the result follows by applying Lemma~\ref{lem:det.stable} twice.
\end{proof}

\section{An existence theorem}

Let $\CC$ be a Hom-finite Krull-Schmidt exact category. Given an object $C$, we denote by $\Gamma_C=\Endc(C)^{\op}$ the opposite algebra of the endomorphism algebra of $C$, and by $\add C$ the category of direct summands of finite direct sums of $C$.

Recall from \cite[Section~2]{JiaoGeneralized} two full subcategories $\CC_r$ and $\CC_l$ of $\CC$ as follows:
\[
  \CC_r = \set{X\in\CC \middle| \mbox{the functor }
  D\Extc(X,-) \colon \overline\CC \to k
  \mbox{-mod is representable}}
\]
and
\[
  \CC_l = \set{X\in\CC \middle| \mbox{the functor }
  D\Extc(-,X) \colon \underline\CC \to k
  \mbox{-mod is representable}}.
\]
Then we have the mutually quasi-inverse equivalences
\[
  \tau \colon \underline{\CC_r}
  \overset\sim\longrightarrow
  \overline{\CC_l}
  \mbox{ and }
  \tau^- \colon \overline{\CC_l}
  \overset\sim\longrightarrow
  \underline{\CC_r}.
\]
For each $X\in\CC_r$, we have a natural isomorphism
\[\Homco(-,\tau X)  \overset\sim\longrightarrow D\Extc(X,-);\]
for each $X\in\CC_l$, we have a natural isomorphism
\[\Homcu(\tau^-X,-) \overset\sim\longrightarrow D\Extc(-,X).\]

Recall that a conflation $\eta\colon X \xrightarrow{u} E \xrightarrow{v} Y$ is \emph{almost split} if $u$ is left almost split and $v$ is right almost split. Assume that $\eta$ is almost split. We mention that both $X$ and $Y$ are indecomposable. By the following lemma, which is due to \cite[Proposition~2.4]{JiaoGeneralized}, we have $X\in\CC_l$ and $Y\in\CC_r$. We mention that $Y\simeq\tau^-X$ in $\underline\CC$ and $X\simeq\tau Y$ in $\overline\CC$; see the proof of \cite[Lemma~3.2]{JiaoGeneralized}.

\begin{lemma}\label{lem:ass}
    Let $X$ be an indecomposable object.
  \begin{enumerate}
    \item\label{item:lem:X:1}
      Assume that $X$ is non-projective. Then $X\in\CC_r$ if and only if there exists an almost split conflation ending at $X$.
    \item\label{item:lem:X:2}
      Assume that $X$ is non-injective. Then $X\in\CC_l$ if and only if there exists an almost split conflation starting at $X$.
    \qed
  \end{enumerate}
\end{lemma}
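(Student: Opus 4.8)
The plan is to prove Lemma~\ref{lem:ass} by invoking the representability criterion built into the definitions of $\CC_r$ and $\CC_l$, then translating ``representing object'' into ``almost split conflation'' by a functorial-isomorphism argument. I will only write out part~\eqref{item:lem:X:1}, since \eqref{item:lem:X:2} is dual (replace $\CC$ by $\CC^{\op}$, which interchanges $\CC_r$ and $\CC_l$, deflations and inflations, projectively and injectively trivial morphisms, and reverses conflations).

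First I would treat the direction ``$X\in\CC_r$ implies there is an almost split conflation ending at $X$''. By definition of $\CC_r$, the functor $D\Extc(X,-)\colon\overline\CC\to k\text{-mod}$ is representable, say by an object $Z$ together with a natural isomorphism $\Homco(Z,-)\xrightarrow{\sim}D\Extc(X,-)$. Since $X$ is indecomposable, $D\Extc(X,X)$ is a nonzero module over the local ring $\Endcu(X)$ (here one uses that $X$ is non-projective so $\Extc(X,X)\neq 0$; actually one needs $\Extc(X,X)\ne 0$, which holds because otherwise $D\Extc(X,-)$ would be a functor that is, up to the stable category, "$\Hom$ out of $0$"—I will need to check that the non-projectivity of $X$ forces $D\Extc(X,-)$ to be a nonzero functor, which is exactly the content needed, and this is where I should cite or reprove the fact that a non-projective indecomposable admits a non-split conflation ending at it). The standard move is: the element of $D\Extc(X,X)$ corresponding under the isomorphism to $\overline{\Id_Z}$ picks out (via $D$) a "nonzero functional" on $\Extc(X,X)$, hence pulls back along a suitable map to produce a distinguished conflation $\eta\colon X'\to E\xrightarrow{v}X$; naturality of the isomorphism then forces, for every $T$, that a morphism $g\colon T\to X$ is a non-retraction (equivalently lies in $\radc(T,X)$, using that $\Endc(X)$ is local and \eqref{eq:rad}) precisely when $g$ factors through $v$. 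This says $v$ is right almost split. One then shows $u\colon X'\to E$ is left almost split by a symmetric argument on the other variable, or more efficiently quotes that a conflation whose deflation is right almost split and whose first term has local endomorphism ring is automatically almost split; indecomposability of $X'$ and $E$ follows.

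For the converse, suppose $\eta\colon X'\xrightarrow{u}E\xrightarrow{v}X$ is an almost split conflation. I would show directly that $\Homco(X',-)$ represents $D\Extc(X,-)$ on $\overline\CC$. The point is that the class $[\eta]\in\Extc(X,X')$, pushed forward, gives for each object $T$ a $k$-linear map $\Homc(X',T)\to\Extc(X,T)$, $g\mapsto g.\eta$, whose composite with a fixed generator of the socle identifies things; concretely, sending $\overline g\in\Homco(X',T)$ to the functional on $\Extc(X,T)$ detecting whether a conflation is "equivalent to a pushout of $\eta$ under $g$" should be shown to be a well-defined natural isomorphism. Here the almost split property enters twice: surjectivity/injectivity of this comparison map is precisely the universal property that every non-split conflation ending at $X$ is a pushout of $\eta$ (which follows from $v$ being right almost split), and well-definedness modulo injectively trivial maps uses that $g.\eta$ splits iff $g$ factors through $u$ (since $u$ is left almost split). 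I expect the main obstacle to be the careful bookkeeping in the converse: verifying that the candidate map is natural in $T$ and descends to the injectively stable category $\overline\CC$, i.e.\ that injectively trivial morphisms $X'\to T$ act as zero on $D\Extc(X,-)$—this is where the left-almost-split condition on $u$ must be used in full, and it is the step most prone to sign/variance slips. Since this lemma is quoted from \cite[Proposition~2.4]{JiaoGeneralized}, in the paper itself I would keep the argument brief and refer there for the detailed verification.
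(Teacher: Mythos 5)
You should first note that the paper contains no proof of this lemma at all: it is quoted verbatim from \cite[Proposition~2.4]{JiaoGeneralized} (hence the \qed attached to the statement), so your closing decision to defer to that reference is exactly what the paper does. Judged as an argument, however, your sketch has genuine gaps, concentrated in the direction ``$X\in\CC_r$ implies there is an almost split conflation ending at $X$''. First, the variance is wrong: as used throughout Section~4, $X\in\CC_r$ means $D\Extc(X,-)\simeq\Homco(-,\tau X)$, a representation in the \emph{contravariant} variable, not $\Homco(Z,-)$; consequently ``the element of $D\Extc(X,X)$ corresponding to $\overline{\Id_Z}$'' does not typecheck (the canonical element is $\phi_Z(\overline{\Id_Z})\in D\Extc(X,Z)$). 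Second, the parenthetical claim that non-projectivity forces $\Extc(X,X)\neq0$ is false in general (a non-projective indecomposable may have no self-extensions); what non-projectivity gives is only $\Extc(X,-)\neq0$. Third, and most seriously, the asserted mechanism ``naturality of the isomorphism then forces that every non-retraction $g\colon T\to X$ factors through $v$'' cannot work as stated: morphisms $T\to X$ act in the frozen first variable of $\Extc(X,-)$, about which a natural isomorphism in the second variable says nothing. To get the right almost split property one must (i) show the representing object is indecomposable in $\overline\CC$, so its stable endomorphism ring is local, (ii) choose $\eta\in\Extc(X,\tau X)$ not merely with the canonical functional nonzero on it, but annihilated by the radical action (an arbitrary non-split conflation ending at $X$ detected by some functional is not almost split), and (iii) control $\eta.g$ for \emph{arbitrary} non-retractions $g\colon T\to X$, which requires either a socle criterion or the companion isomorphism $\Homcu(\tau^-\tau X,-)\simeq D\Extc(-,\tau X)$ together with $\tau^-\tau X\simeq X$ in $\underline\CC$ and \eqref{eq:rad}. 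None of this appears in the sketch. (Also, the middle term $E$ of an almost split conflation need not be indecomposable.)

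The converse direction is much closer to a correct argument, but the bookkeeping is misallocated. The comparison map should be $\psi_T(\overline g)(\mu)=\theta(g.\mu)$ for a fixed linear form $\theta$ on $\Extc(X,X')$ with $\theta(\eta)\neq0$; your ``functional detecting whether a conflation is a pushout of $\eta$'' is not $k$-linear. Well-definedness on $\overline\CC$ is automatic, since an injectively trivial $g$ satisfies $g.\mu=0$ by definition, so that is not where left almost splitness enters. Rather, the two halves of the almost split property give the two nondegeneracies of the pairing $(g,\mu)\mapsto\theta(g.\mu)$: if $\overline g\neq0$, some pushout $g.\nu$ is non-split, its inflation is a non-section and factors through $u$, producing $\mu\in\Extc(X,T)$ with $g.\mu=\eta$; dually, if $\mu\in\Extc(X,T)$ is non-split, its deflation factors through $v$, producing $g$ with $g.\mu=\eta$. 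Hom-finiteness then turns the two nondegeneracies into the required bijectivity and naturality is a routine check. With these corrections the converse can be completed, but as written the forward direction has a real gap, so keeping the lemma as a citation to \cite[Proposition~2.4]{JiaoGeneralized}, as the paper does, is the right call.
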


The classical case of the following lemma is well known; see \cite[Corollary~3.5]{Ringel2013Auslander}.

\begin{lemma}\label{lem:ker.det}
  Let $\alpha\colon X\to Y$ be a deflation with $\Ker\alpha\in\CC_l$. Then $\alpha$ is right $\tau^-(\Ker\alpha)$-determined.
\end{lemma}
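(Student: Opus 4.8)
The plan is to express the condition ``right $\tau^-(\Ker\alpha)$-determined'' entirely in terms of the bifunctor $\Extc$ and then invoke the representability built into the definition of $\CC_l$.

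Write $K=\Ker\alpha$ and $C=\tau^-K$; since $K\in\CC_l$ by hypothesis, $C$ is defined and we have the natural isomorphism $\Phi\colon\Homcu(C,-)\overset\sim\longrightarrow D\Extc(-,K)$ of covariant functors $\underline\CC\to k\text{-mod}$. Fix a conflation $\eta\colon K\xrightarrow{i}X\xrightarrow{\alpha}Y$ presenting $K$ as the kernel of the deflation $\alpha$. Throughout I would use the standard fact that, for any morphism $p\colon Z\to Y$, the pulled-back class $\eta.p\in\Extc(Z,K)$ vanishes if and only if $p$ factors through $\alpha$ (a splitting of the pullback conflation, followed by its canonical morphism to $X$, yields the factorization, and conversely). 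Together with the associativity $\eta.(p\circ h)=(\eta.p).h$ and the contravariance of $\Extc(-,K)$ in the first variable, this turns the determinedness condition into a purely $\Extc$-theoretic one.

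Next I would unwind the representability via Yoneda. The natural isomorphism $\Phi$ corresponds to the element $e:=\Phi_C(\underline{1_C})\in D\Extc(C,K)$, that is, a $k$-linear map $e\colon\Extc(C,K)\to\check k$, and naturality of $\Phi$ along an arbitrary $h\colon C\to T$ gives $\Phi_T(\underline h)=e\circ\Extc(h,K)$; explicitly, $\Phi_T(\underline h)(\xi)=e(\xi.h)$ for every $\xi\in\Extc(T,K)$. The one property I really need from this is that $\Phi_T$ is surjective, so that every $k$-linear functional on $\Extc(T,K)$ equals $\Phi_T(\underline h)$ for a suitable $h\colon C\to T$.

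With these in hand the proof is short. Let $g\colon T\to Y$ be a morphism such that $g\circ h$ factors through $\alpha$ for every $h\colon C\to T$; I must show $g$ factors through $\alpha$, i.e.\ that $\eta.g=0$ in $\Extc(T,K)$. By the facts above, the hypothesis says precisely that $(\eta.g).h=\eta.(g\circ h)=0$ for every $h\colon C\to T$, whence $\Phi_T(\underline h)(\eta.g)=e\bigl((\eta.g).h\bigr)=0$ for all such $h$. By surjectivity of $\Phi_T$, every functional in $D\Extc(T,K)$ kills $\eta.g$; since $\check k$ is a cogenerator over $k$, the canonical map $\Extc(T,K)\to DD\Extc(T,K)$ is injective, and therefore $\eta.g=0$. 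Thus $g$ factors through $\alpha$, and $\alpha$ is right $C$-determined. I do not expect a genuine obstacle here: the argument is formal, and the only places demanding care are keeping the two variances of $\Extc$ straight (so that $(\eta.g).h=\eta.(g\circ h)$) and citing the pullback--splitting criterion for factoring through a deflation.
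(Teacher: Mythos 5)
Your proposal is correct and follows essentially the same route as the paper's proof: both use the representing isomorphism $\Homcu(\tau^-K,-)\simeq D\Extc(-,K)$, extract the Yoneda element at the identity, compute $\Phi_T(\underline h)(\eta.g)=0$ via naturality and $(\eta.g).h=\eta.(g\circ h)$, and conclude $\eta.g=0$ because all functionals in $D\Extc(T,K)$ vanish on it. Your explicit citation of the injectivity of the biduality map $\Extc(T,K)\to DD\Extc(T,K)$ just spells out what the paper leaves implicit.
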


\begin{proof}
  Let $f\colon T\to Y$ be a morphism such that for each $g\colon\tau^-(\Ker\alpha)\to T$, the morphism $f \circ g$ factors through $\alpha$. We denote by $\eta$ the conflation $\Ker\alpha\to X\xrightarrow\alpha Y$. We obtain that $\eta.(f \circ g)$ splits.
  Since $\Ker\alpha\in\CC_l$, there exists a natural isomorphism
  \[
    \phi\colon \Homcu(\tau^-(\Ker\alpha),-)
    \longrightarrow D\Extc(-,\Ker\alpha).
  \]
  Set
  $\gamma = \phi_{\tau^-(\Ker\alpha)}(\underline{\Id_{\tau^-(\Ker\alpha)}})$.
  By the naturality of $\phi$, we have
  \[
    \phi_T(\underline g)
    = D\Extc(g,\Ker\alpha)(\gamma)
    = \gamma\circ\Extc(g,\Ker\alpha).
  \]
  It follows that
  \[
    \phi_T(\underline g)(\eta.f)
    = \gamma((\eta.f).g)
    = \gamma(\eta.(f\circ g))
    = 0.
  \]
  We observe that $\phi_T(\underline g)$ runs over all maps in $D\Extc(T,\Ker\alpha)$, when $\underline g$ runs over all morphisms in $\Homcu(\tau^-(\Ker\alpha),T)$.
  It follows that $\eta.f$ splits. In other words, the morphism $f$ factors through $\alpha$. Then the result follows.
\end{proof}

Given an object $C$ and a morphism $f\colon X\to Y$, we denote by $\Im\Homc(C,f)$ the image of the induced map $\Homc(C,f)\colon\Homc(C,X)\to\Homc(C,Y)$. Generalizing
\cite[Corollary~XI.3.4]{AuslanderReitenSmalo1995Representation}
to an exact category $\CC$, we obtain the following existence theorem; compare
\cite[Theorem~XI.3.6]{AuslanderReitenSmalo1995Representation}
and
\cite[Theorem~4.4]{Ringel2013Auslander}.
Since $\CC$ does not have enough projectives, the treatment here is completely different.

\begin{theorem}\label{thm:exist}
  Let $C$ and $Y$ be two objects. Assume that $C\in\CC_r$ and $H$ is a left $\Gamma_C$-submodule of $\Homc(C,Y)$ satisfying $\CP(C,Y)\subseteq H$. Then there exists some deflation $\alpha\colon X \to Y$, which is right $C$-determined such that $\Ker\alpha\in\add(\tau C)$ and $H=\Im\Homc(C,\alpha)$.
\end{theorem}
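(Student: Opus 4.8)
The plan is to produce $\alpha$ as the deflation of a conflation $\eta\colon K\to X\xrightarrow{\alpha}Y$ with $K=(\tau C)^m$ for a suitable $m\ge 0$, chosen so that the connecting map $f\mapsto\eta.f$ has kernel exactly $H$; the right $C$-determinedness will then come for free from Lemma~\ref{lem:ker.det}.

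\emph{Reduction.} Set $\Lambda:=\Homcu(C,C)$, an artinian ring. Since $\CP$ is an ideal, $\CP(C,C)\cdot\Homc(C,Y)\subseteq\CP(C,Y)\subseteq H$, so $W:=\Homc(C,Y)/H$ is a finite length left $\Lambda^{\op}$-module and a quotient of $\Homcu(C,Y)$. Likewise $\Extc(C,\tau C)$ is a left $\Lambda^{\op}$-module, because $\Extc(e,-)=0$ for $e\in\CP(C,C)$. Using the Auslander--Reiten formula $\Homcu(\tau^-\tau C,-)\simeq D\Extc(-,\tau C)$ (available because $\tau C\in\CC_l$) together with $\tau^-\tau C\simeq C$ in $\underline\CC$, evaluation at $C$ identifies $\Extc(C,\tau C)$, as a $\Lambda^{\op}$-module, with the injective cogenerator $D\Lambda$ of $\Lambda^{\op}$-mod. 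Hence $W$ embeds as a $\Lambda^{\op}$-module into $\Extc(C,\tau C)^m=\Extc(C,(\tau C)^m)$ for some $m\ge 0$. I put $K:=(\tau C)^m\in\add(\tau C)$ and let $\theta\colon\Homcu(C,Y)\to\Extc(C,K)$ be the composite of the canonical surjection $\Homcu(C,Y)\to W$ with this embedding, so $\theta$ is $\Lambda^{\op}$-linear with $\Ker\theta=H/\CP(C,Y)$.

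\emph{The crux.} The key point, which I would isolate as a lemma, is that for $C\in\CC_r$ and any $K\in\add(\tau C)$ the assignment
\[
  \eta\longmapsto\bigl(\,\underline f\longmapsto\eta.f\,\bigr),\qquad
  \Extc(Y,K)\longrightarrow\Hom_{\Lambda^{\op}}\!\bigl(\Homcu(C,Y),\Extc(C,K)\bigr),
\]
is an isomorphism. (It is well defined into $\Hom_{\Lambda^{\op}}$ since $\Extc(f,-)=0$ for projectively trivial $f$ and $\eta.(f\circ e)=\Extc(e,K)(\eta.f)$.) Granting this, I choose $\eta\in\Extc(Y,K)$ with $\underline f\mapsto\eta.f$ equal to $\theta$ and represent $\eta$ by a conflation $K\xrightarrow{u}X\xrightarrow{\alpha}Y$. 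Then $\Ker\alpha=K\in\add(\tau C)$, and, since $\eta.f$ splits precisely when $f$ factors through $\alpha$,
\[
  \Im\Homc(C,\alpha)=\{\,f\colon C\to Y\mid\eta.f\ \text{splits}\,\}=\{\,f\mid\theta(\underline f)=0\,\}=H,
\]
the last equality using $\CP(C,Y)\subseteq H$. Finally $\alpha$ is right $C$-determined: by Lemma~\ref{lem:ker.det} it is right $\tau^-(\Ker\alpha)$-determined; as $\tau^-$ is additive and $\tau^-\tau C\simeq C$ in $\underline\CC$, we get $\tau^-((\tau C)^m)\simeq C^m$ in $\underline\CC$, so Proposition~\ref{prop:det} makes $\alpha$ right $C^m$-determined; and right $C^m$-determined implies right $C$-determined, because any $h\colon C^m\to T$ decomposes through the coproduct injections as $h=(h_1,\dots,h_m)$, so $g\circ h$ factors through $\alpha$ whenever each $g\circ h_i$ does.

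\emph{The main obstacle} is the proof of the displayed isomorphism, which requires some bookkeeping with the Auslander--Reiten formula. I would argue as follows. Both functors of $K$ are additive and the transformation is natural in $K$, so it suffices to take $K=\tau C$. Then both sides are finite length $k$-modules of equal length: on the left $\Extc(Y,\tau C)\simeq D\Homcu(C,Y)$ by the Auslander--Reiten formula (using $\tau^-\tau C\simeq C$), while on the right $\Hom_{\Lambda^{\op}}(\Homcu(C,Y),\Extc(C,\tau C))=\Hom_{\Lambda^{\op}}(\Homcu(C,Y),D\Lambda)\simeq D\Homcu(C,Y)$ since $D\Lambda$ is the injective cogenerator. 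Hence it is enough to prove injectivity. Suppose $\eta\in\Extc(Y,\tau C)$ satisfies $\eta.f=0$ for all $f\colon C\to Y$. Write $\Theta\colon\Homcu(C,-)\simeq D\Extc(-,\tau C)$ for the Auslander--Reiten isomorphism (rewritten via $\tau^-\tau C\simeq C$) and put $\lambda_0:=\Theta_C(\underline{\Id_C})\in D\Extc(C,\tau C)$. Naturality of $\Theta$ along the morphisms $f\colon C\to Y$ gives $\Theta_Y(\underline f)=\bigl(\zeta\mapsto\lambda_0(\zeta.f)\bigr)$ in $D\Extc(Y,\tau C)$; dualizing, under $\Extc(Y,\tau C)\simeq D\Homcu(C,Y)$ the element $\eta$ corresponds to the functional $\underline f\mapsto\lambda_0(\eta.f)$, which vanishes by hypothesis. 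Therefore $\eta=0$, which proves injectivity and hence the claim.
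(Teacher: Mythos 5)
Your proposal is correct, and it takes a genuinely different route from the paper's, even though both rest on the same input, namely the isomorphism $\Homcu(C,-)\simeq D\Extc(-,\tau C)$ coming from $\tau C\in\CC_l$ and $\tau^-\tau C\simeq C$ in $\underline\CC$. The paper works on the Matlis-dual side: it forms the annihilator $\underline H^{\perp}\subseteq\Extc(Y,\tau C)$ of $\underline H=H/\CP(C,Y)$ under the pairing $\phi_Y$, picks finitely many generators $\eta_1,\dots,\eta_n$ of $\underline H^{\perp}$ as a right $\Gamma_C$-module, defines $\alpha$ as the pullback of $\bigoplus_{i=1}^n\alpha_i$ along the diagonal $Y\to Y^n$, and then proves $H=\Im\Homc(C,\alpha)$ by showing ${}^{\perp}(\eta_i\Gamma_C)=\Im\Homc(C,\alpha_i)/\CP(C,Y)$ and intersecting; right $C$-determinedness is assembled from Lemma~\ref{lem:ker.det} applied to each $\alpha_i$ together with Lemmas~\ref{lem:det.sum} and~\ref{lem:det.PB}. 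You instead embed the quotient $\Homc(C,Y)/H$ into powers of the injective cogenerator $\Extc(C,\tau C)\simeq D\Lambda$ and realize the resulting module map as the connecting map of a single class in $\Extc(Y,(\tau C)^m)$, via the isomorphism $\Extc(Y,K)\simeq\Hom_{\Lambda^{\op}}(\Homcu(C,Y),\Extc(C,K))$ for $K\in\add(\tau C)$ --- a statement the paper never formulates, which your length-count-plus-injectivity argument does establish correctly; determinedness then comes from Lemma~\ref{lem:ker.det} applied to $\alpha$ itself (legitimate, since $(\tau C)^m\in\CC_l$), Proposition~\ref{prop:det}, and your elementary $C^m$-versus-$C$ reduction, so you bypass Lemmas~\ref{lem:det.sum} and~\ref{lem:det.PB} entirely. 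The two constructions yield essentially the same deflation (the diagonal pullback of $\bigoplus_i\eta_i$ represents the class $(\eta_1,\dots,\eta_n)\in\Extc(Y,(\tau C)^n)$), but your packaging buys a reusable Hom--Ext isomorphism of independent interest and a one-line identification of $\Im\Homc(C,\alpha)$ with $H$, at the modest price of checking that $\phi_C$ identifies $\Extc(C,\tau C)$ with $D\Lambda$ as a right $\Lambda$-module --- a naturality verification that is implicit in, and justified by, the computation you carry out in the injectivity step.
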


\begin{proof}
  By the assumption we have $\tau C\in\CC_l$ and $\tau^-\tau C\simeq C$ in $\underline\CC$. Then there exists a natural isomorphism
  \[
    \phi\colon\Homcu(C,-)
    \longrightarrow D\Extc(-,\tau C).
  \]
  Set $\gamma=\phi_C(\underline{\Id_C})$. By the naturality of $\phi$, for each object $Z$ and each morphism $f\colon C\to Z$, we have
  \[
    \phi_Z(\underline f)
    = D\Extc(f,\tau C)(\gamma)
    = \gamma\circ\Extc(f,\tau C).
  \]
  Then for each $\mu\in\Extc(Z,\tau C)$, we have
  \[
    \phi_Z(\underline f)(\mu)
    = \gamma(\Extc(f,\tau C)(\mu))
    = \gamma(\mu.f).
  \]

  We set $\underline H=H/\CP(C,Y)$ and set
  \[
    \underline H^\perp
    = \set{\mu\in\Extc(Y,\tau C) \middle|
    \phi_Y(\underline h)(\mu) = 0
    \mbox{ for each }
    \underline h \in \underline H}.
  \]
  We observe that $\underline H^\perp$ is a right $\Gamma_C$-submodule of $\Extc(Y,\tau C)$. Here, for any $f\in\Gamma_C$ and $\mu\in\Extc(Y,\tau C)$, the action of $f$ on $\mu$ is given by $\tau(\underline{f}).\mu$.
  Then there exists finitely many $\eta_1,\eta_2,\dots,\eta_n$ in $\Extc(Y,\tau C)$ such that $\underline H^\perp = \sum_{i=1}^n \eta_i\Gamma_C$. Assume that $\eta_i$ is represented by the conflation $\tau C\to X_i\xrightarrow{\alpha_i} Y$ for each $i=1,2,\dots,n$. We have that $\alpha_i$ is right $\tau^-\tau C$-determined by Lemma~\ref{lem:ker.det}. Then $\alpha_i$ is right $C$-determined by Proposition~\ref{prop:det}. We observe that $\bigoplus_{i=1}^n \alpha_i$ is a deflation. By Lemma~\ref{lem:det.sum}, we have that $\bigoplus_{i=1}^n \alpha_i$ is right $C$-determined.

  Consider the following commutative diagram obtained by a pullback
  \[\xymatrix@+1pc{
    \bigoplus_{i=1}^n \tau C\ar@{-->}[r] &X\ar@{-->}[r]^\alpha\ar@{-->}[d]  &Y\\
    \bigoplus_{i=1}^n \tau C\ar[r] &\bigoplus_{i=1}^n X_i &\bigoplus_{i=1}^n Y.
    \ar[u];[]^-{\smatrix{\Id_Y\\[-3pt]\vdots\\[3pt] \Id_Y}}
    \ar[l];[]^{\bigoplus_{i=1}^n \alpha_i}
    \ar@{=}[llu];[ll]
  }\]
  We have that $\alpha$ is a deflation and $\Ker\alpha$ lies in $\add(\tau C)$. By Lemma~\ref{lem:det.PB}, the deflation $\alpha$ is right $C$-determined. By a direct verification, we have
  \[
    \Im \Homc(C,\alpha)
    = \bigcap_{i=1}^n \Im\Homc(C,\alpha_i).
  \]

  For each $i=1,2,\dots,n$, we set
  \[
    ^\perp(\eta_i \Gamma_C)
    =\set{\underline h \in \Homcu(C,Y) \middle|
    \phi_Y(\underline{h})(\mu) = 0
    \mbox{ for each }
    \mu \in \eta_i \Gamma_C}.
  \]
  We observe that $^\perp(\eta_i\Gamma_C)$ is a left $\Gamma_C$-submodule of $\Homcu(C,Y)$. We mention that $\CP(C,Y)\subseteq\Im\Homc(C,\alpha_i)$, since $\alpha_i$ is a deflation.
  We claim that
  \[
    ^\perp(\eta_i\Gamma_C)
    = \Im \Homc(C,\alpha_i)/\CP(C,Y).
  \]

  Let $h\colon C\to Y$ be a morphism in $\Im\Homc(C,\alpha_i)$. We obtain that $\eta_i.h$ splits. Then we have
  \[
    \phi_Y(\underline h)(\tau(\underline f).\eta_i)
    = \gamma((\tau(\underline f).\eta_i).h)
    = \gamma(\tau(\underline f).(\eta_i.h))
    = 0,
  \]
  for each $f\colon C\to C$. It follows that
  $\Im\Homc(C,\alpha_i)/\CP(C,Y)\subseteq {^\perp(\eta_i\Gamma_C)}$.

  On the other hand, let $h\colon C\to Y$ be a morphism such that $\underline h\in{^\perp(\eta_i\Gamma_C)}$. Then we have $\phi_Y(\underline h)(\tau(\underline f).\eta_i)=0$ for each $f\colon C\to C$. By the definition of $\tau$ in \cite[Section~3]{JiaoGeneralized}, we have the following commutative diagram
  \[\xymatrix{
    \Homcu(C,Y)\ar[r]^-{\phi_Y} &D\Extc(Y,\tau C)\\
    \Homcu(C,Y)\ar[r]^-{\phi_Y} &D\Extc(Y,\tau C).
    \ar[lu];[l]_{\Homcu(\underline f,Y)}
    \ar[u];[]^{D\Extc(Y,\tau(\underline f))}
  }\]
  By a diagram chasing, we have
  \[\begin{split}
    \phi_Y(\underline{h} \circ \underline{f})
    &= \phi_Y ( \Homcu(\underline{f}, Y) (\underline{h}) )\\
    &= (D\Extc(Y, \tau(\underline{f})) \circ \phi_Y) (\underline{h})\\
    &= \phi_Y(\underline{h}) \circ \Extc(Y, \tau(\underline{f})).\\
  \end{split}\]
  Then for each $\eta_i$, we have
  \[
    \phi_Y(\underline h\circ\underline f)(\eta_i)
    = \phi_Y(\underline h)(\tau(\underline f).\eta_i)
    = 0.
  \]
  It follows that
  \[
    \phi_C(\underline f)(\eta_i.h)
    = \gamma((\eta_i.h).f)
    = \gamma(\eta_i.(h\circ f))
    = \phi_Y(\underline{h\circ f})(\eta_i)
    = 0.
  \]
  We observe that $\phi_C(\underline f)$ runs over all maps in $D\Extc(C,\tau C)$, when $\underline f$ runs over all morphisms in $\underline\End_\CC(C)$. It follows that $\eta_i.h$ splits. In other words, the morphism $h$ factors through $\alpha_i$. We then obtain $h\in \Im\Homc(C,\alpha_i)$. It follows that
  ${^\perp(\eta_i\Gamma_C)}\subseteq\Im\Homc(C,\alpha_i)/\CP(C,Y)$.

  We observe that
  \[
    \underline H
    = {^\perp(\underline H^\perp)}
    = {^\perp(\sum_{i=1}^n\eta_i\Gamma_C)}
    = \bigcap_{i=1}^n {^\perp(\eta_i\Gamma_C)},
  \]
  where the first equality follows from the isomorphism $\phi_Y$. It follows that
  \[
    \underline H
    = \bigcap_{i=1}^n \Im \Homc(C,\alpha_i)/\CP(C,Y)
    = \Im \Homc(C,\alpha)/\CP(C,Y).
  \]
  Then the result follows since $\CP(C,Y)\subseteq H$.
\end{proof}

\section{A characterization for determined deflation}

Let $\CC$ be a Hom-finite Krull-Schmidt exact category. We give a characterization for a deflation being right $C$-determined for some object $C$.

The following lemma, which is due to
\cite[Proposition~XI.2.4 and Lemma~XI.2.1]{AuslanderReitenSmalo1995Representation},
shows that each right $C$-determined morphism has a minimal right determiner.

\begin{lemma}\label{lem:min.det}
  Let $f\colon X\to Y$ be a morphism, which is right $C$-determined for some object $C$.
  \begin{enumerate}
    \item\label{item:lem:C:1}
      Assume that $C'$ is an indecomposable object and $f'\colon C'\to Y$ almost factors through $f$. Then $C'$ is a direct summand of $C$.
    \item\label{item:lem:C:2}
      Assume that $\set{C_1,C_2,\dots,C_t}$ is a complete set of pairwise non-isomorphic indecomposable objects such that there exists some morphism $f_i\colon C_i\to Y$, which almost factors through $f$. Then $\bigoplus_{i=1}^t C_i$ is a minimal right determiner of $f$.
    \qed
  \end{enumerate}
\end{lemma}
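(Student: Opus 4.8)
The plan is to prove Lemma~\ref{lem:min.det} by exploiting the defining property of a right $C$-determined morphism together with the structure of almost factorizations, in the spirit of \cite[Chapter~XI]{AuslanderReitenSmalo1995Representation} but keeping everything inside the additive category $\CC$. For part~\eqref{item:lem:C:1}, suppose $C'$ is indecomposable and $f'\colon C'\to Y$ almost factors through $f$. The key observation is that, because $C$ is a right determiner of $f$, the morphism $f'$ must have the property that \emph{some} composite $f'\circ h$ with $h\colon C\to C'$ fails to factor through $f$ — otherwise the determiner property would force $f'$ itself to factor through $f$, contradicting the definition of almost factoring. I would then argue that such an $h$ cannot lie in $\radc(C,C')$: if it did, the almost-factoring hypothesis on $f'$ would make $f'\circ h$ factor through $f$. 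So $h\colon C\to C'$ is a morphism not in the radical; decomposing $C$ into indecomposables and using that $C'$ is indecomposable with local endomorphism ring, one of the components $C_j\to C'$ of $h$ is not in the radical, hence (by Krull--Schmidt and locality of $\Endc(C')$) is a split epimorphism, so $C'$ is a direct summand of $C_j$, hence of $C$.

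For part~\eqref{item:lem:C:2}, I would first show that $\bigoplus_{i=1}^t C_i$ is itself a right determiner of $f$. The natural approach is: by part~\eqref{item:lem:C:1} each $C_i$ is a direct summand of the given determiner $C$, so $C' := \bigoplus_{i=1}^t C_i$ is a direct summand of $C$, say $C \simeq C'\oplus C''$. One then checks that $f$ being right $(C'\oplus C'')$-determined and the $C_i$ exhausting the indecomposables admitting an almost factorization together force $f$ to be right $C'$-determined; concretely, given $g\colon T\to Y$ with $g\circ h$ factoring through $f$ for all $h\colon C'\to T$, one wants to deduce the same for all $h\colon C''\to T$ and then apply the $C$-determinedness. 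Here I expect to need a fact of the form: if some $h\colon C''\to T$ gives $g\circ h$ not factoring through $f$, then after composing with a suitable radical or non-radical map one produces an indecomposable $C_0\to Y$ almost factoring through $f$ with $C_0$ not among the $C_i$, or with $C_0$ a summand of $C''$, contradicting completeness of the list. Once $C'$ is established as a right determiner, minimality is immediate: any right determiner $C^\sharp$ of $f$ must, by part~\eqref{item:lem:C:1} applied with the morphisms $f_i$, contain each indecomposable $C_i$ as a direct summand, hence contains $\bigoplus_{i=1}^t C_i$ as a direct summand, which is exactly the definition of $\bigoplus_{i=1}^t C_i$ being a minimal right determiner.

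The main obstacle I anticipate is the passage, in part~\eqref{item:lem:C:2}, from "$C'$ is a direct summand of the determiner $C$" to "$C'$ is itself a determiner" — that is, showing one can discard the complementary summand $C''$. This requires knowing that every indecomposable summand of $C''$ that is genuinely relevant to testing the determiner condition already appears (up to the radical) among the $C_i$; the tool for this should be a statement characterizing which indecomposable objects $C_0$ are "relevant" to a right $C$-determined morphism $f$, namely those $C_0$ for which there exists $g\colon C_0\to Y$ not factoring through $f$ but with $g\circ(\radc(-,C_0))$ factoring through $f$ — i.e.\ exactly the objects admitting an almost factorization through $f$. Making this dichotomy precise (a summand either admits an almost factorization, in which case it is on the list, or it is "inessential" and can be dropped) is where the real content lies, and I would model it on the covariant-functor argument \cite[Lemma~XI.2.1]{AuslanderReitenSmalo1995Representation}: consider the functor $\Homc(-,Y)/(\text{maps factoring through }f)$ restricted to $\add C$, observe it is a finitely generated $\Gamma_C$-module whose top is supported precisely on the $C_i$, and conclude that the trace of $C'$ already generates it.
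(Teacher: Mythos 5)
Your part (1) is correct and is the standard argument: since $f$ is right $C$-determined and $f'$ does not factor through $f$, some $h\colon C\to C'$ has $f'\circ h$ not factoring through $f$; such an $h$ cannot lie in $\radc(C,C')$, so a component $C_j\to C'$ from an indecomposable summand of $C$ is non-radical, hence (local endomorphism rings) split epi, giving $C_j\simeq C'$. Also, once $\bigoplus_{i=1}^t C_i$ is known to be a right determiner of $f$, your minimality argument via part (1) and Krull--Schmidt is fine. (For the record, the paper gives no proof of this lemma; it only cites \cite[Proposition~XI.2.4 and Lemma~XI.2.1]{AuslanderReitenSmalo1995Representation}, so your argument must stand on its own.)

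The genuine gap is in the crucial step of part (2), namely that $C'=\bigoplus_{i=1}^t C_i$ is itself a right determiner. Your proposed mechanism --- ``the $\Gamma_C$-module $Q=\Homc(C,Y)/\Im\Homc(C,f)$ has top supported precisely on the $C_i$, so the trace of $C'$ generates it'' --- fails on two counts. First, it is the socle of $Q$, not its top, that detects the $C_i$: for an indecomposable summand $D$ of $C$, a morphism $v\colon D\to Y$ almost factors through $f$ exactly when its class in $Q$ (compose with a projection $C\to D$) is nonzero and annihilated by $\rad\Gamma_C$, and the top can sit away from the $C_i$. For instance, over the path algebra $\Lambda$ of $1\to 2$, take $f\colon 0\to P_1$ and $C=\Lambda$; then $Q\simeq P_1$ has top $S_1$, corresponding to the summand $P_1$, while the only indecomposable admitting an almost factorization through $f$ is $P_2$ (the minimal right determiner). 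Second, even a correct support statement for $Q$ does not interact with the test morphism $g\colon T\to Y$, so ``the trace of $C'$ generates $Q$'' cannot yield the determiner property. The working argument runs inside the submodule $\bigl(g\circ\Homc(C,T)+\Im\Homc(C,f)\bigr)/\Im\Homc(C,f)$, which is nonzero because $g$ does not factor through $f$ and $f$ is right $C$-determined; a simple submodule of it, concentrated at an indecomposable summand $D$ of $C$, gives $u\colon D\to T$ with $g\circ u$ not factoring through $f$ but $g\circ u\circ r$ factoring through $f$ for all $r\in\radc(C,D)$. One must then invoke the right $C$-determinedness of $f$ a second time to upgrade from radical morphisms out of $\add C$ to radical morphisms out of arbitrary objects, so that $g\circ u$ genuinely almost factors through $f$; only then does completeness of the list give $D\simeq C_i$ and a contradiction with the hypothesis on $g$. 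This second use of the hypothesis is precisely what Example~\ref{ex:det} shows cannot be avoided, and it is missing from your sketch.
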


Recall that a morphism $f\colon X\to Y$ is called \emph{right minimal}, if each $g\in\Endc(X)$ with $f\circ g=f$ is an automorphism. Dually, we call $f$ \emph{left minimal}, if each $g\in\Endc(Y)$ with $g\circ f=f$ is an automorphism.

Two morphisms $f\colon X\to Y$ and $f'\colon X'\to Y$ are called \emph{right equivalent} if $f$ factors through $f'$ and $f'$ factors through $f$. Assume that $f$ and $f'$ are right equivalent. Given an object $C$, we mention that $f$ is right $C$-determined if and only if so is $f'$. We observe that $\CC$ has split idempotents;
see \cite[Corollary~4.4]{Krause2015Krull}.
By \cite[Proposition~7.6]{Buhler2010Exact},
we have that $f$ is a deflation if and only if so is $f'$.

Given a morphism $f\colon X\to Y$ , we call a right minimal morphism $f'\colon X'\to Y$ the \emph{right minimal version} of $f$, if $f$ and $f'$ are right equivalent. Following
\cite[Section~2]{Ringel2012Morphisms},
we call $\Ker f'$ the \emph{intrinsic kernel} of $f$.

We mention that each morphism $f\colon X\to Y$ has a right minimal version; see
\cite[Theorem~1]{Bian2009Right}.
Indeed, by the projectivization in
\cite[Section~II.2]{AuslanderReitenSmalo1995Representation},
we have an equivalence $F \colon \add(X \oplus Y) \to \proj \Gamma_{X \oplus Y}$, where $\proj \Gamma_{X \oplus Y}$ is the category of finitely generated projective $\Gamma_{X \oplus Y}$-modules. By
\cite[Proposition~I.2.1]{AuslanderReitenSmalo1995Representation},
the morphism $F(f)$ has a right minimal version $g$ in $\proj \Gamma_{X \oplus Y}$. Then $F^{-1}(g)$ yields the required morphism, where $F^{-1}$ is a quasi-inverse of $F$.

Dually, two morphisms $f\colon X\to Y$ and $f'\colon X\to Y'$ are left equivalent if $f$ factors through $f'$ and $f'$ factors through $f$. We mention that there exists some left minimal morphism $g\colon X\to Z$ such that $f$ and $g$ are left equivalent. We call $g$ the \emph{left minimal version} of $f$, and call $\Cok g$ the \emph{intrinsic cokernel} of $f$.

\begin{lemma}\label{lem:exist.ass}
  Let $\alpha\colon X\to Y$ be a deflation and $C$ be an indecomposable object. Assume that there exists some morphism $f\colon C\to Y$ which almost factors through $\alpha$. Then there exists an almost split conflation $K\to E\to C$ such that $K$ is a direct summand of the intrinsic kernel of $\alpha$.
\end{lemma}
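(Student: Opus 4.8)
The plan is to realise the required almost split conflation as the conflation attached to a right minimal version of a pullback of $\alpha$ along $f$. First, I would replace $\alpha$ by its right minimal version $\alpha_0\colon X_0\to Y$: since $\alpha$ and $\alpha_0$ are right equivalent, a morphism factors through $\alpha$ if and only if it factors through $\alpha_0$, so $f$ still almost factors through $\alpha_0$; moreover $\alpha_0$ is again a deflation and $\Ker\alpha_0$ is, by definition, the intrinsic kernel of $\alpha$. I would then form the pullback of $\alpha_0$ along $f$, obtaining a deflation $\alpha'\colon E\to C$ whose defining conflation is isomorphic to the pullback of the defining conflation of $\alpha_0$ along $f$; in particular $\Ker\alpha'\simeq\Ker\alpha_0$. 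As $C$ is indecomposable, $\Endc(C)$ is local, so Proposition~\ref{prop:ras} applies to this pullback and shows that $\alpha'$ is right almost split.

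Next I would pass to a right minimal version $\beta\colon E'\to C$ of $\alpha'$. Being right equivalent to $\alpha'$, the morphism $\beta$ is again a deflation and again right almost split. Writing $\alpha'\simeq\smatrix{\beta&0}\colon E'\oplus E_1\to C$, where $E_1$ is the summand of $E$ annihilated by $\alpha'$, one reads off $\Ker\alpha'\simeq\Ker\beta\oplus E_1$; therefore $K:=\Ker\beta$ is a direct summand of $\Ker\alpha'\simeq\Ker\alpha_0$, the intrinsic kernel of $\alpha$. The candidate is the conflation $\eta\colon K\xrightarrow{i}E'\xrightarrow{\beta}C$, and it remains to check that $\eta$ is almost split.

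Since $\beta$ is already right almost split, I only have to show that $i$ is left almost split. That $i$ is not a section is clear, for otherwise $\eta$ splits and $\beta$ becomes a retraction. For the remaining condition, I would take $h\colon K\to Z$ not factoring through $i$ and push $\eta$ out along $h$, getting a conflation $Z\xrightarrow{i'}M\xrightarrow{\beta'}C$ which does not split (as $h$ does not factor through $i$), so $\beta'$ is not a retraction. Since $\beta$ is right almost split, $\beta'=\beta\circ s$ for some $s\colon M\to E'$; comparing this with $\beta'\circ h'=\beta$ and using that $\beta$ is right minimal forces $s\circ h'$ to be an automorphism of $E'$, so that $r:=(s\circ h')^{-1}\circ s$ is a retraction of $h'$ with $\beta\circ r=\beta'$. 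Then $\beta\circ(r\circ i')=\beta'\circ i'=0$, so $r\circ i'=i\circ g$ for some $g\colon Z\to K$, and $i\circ(g\circ h)=r\circ h'\circ i=i$; since $i$ is monic this gives $g\circ h=\Id_K$, so $h$ is a section. Hence $i$ is left almost split and $\eta$ is the desired almost split conflation.

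I expect the last step to be the main obstacle: it is essentially the assertion that a right minimal, right almost split deflation gives rise to an almost split conflation, and proving it requires combining the right almost split and the right minimal properties carefully inside the pushout diagram (alternatively one could quote this fact if a convenient reference in the exact-category setting is at hand). A secondary point to watch is that the kernel produced must be a summand of the \emph{intrinsic} kernel of $\alpha$; this is exactly why the construction pulls back the right minimal version $\alpha_0$ rather than $\alpha$ itself, and then passes to a right minimal version of the pullback.
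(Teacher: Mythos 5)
Your proposal is correct and follows essentially the same route as the paper: right-minimalize $\alpha$, pull back along $f$ and apply Proposition~\ref{prop:ras}, pass to the right minimal version of the pullback so that its kernel is a summand of the intrinsic kernel, and then verify the left almost split property of the inflation via a pushout argument that exploits right minimality. Your explicit construction of the retraction $r$ is just a slight repackaging of the paper's contradiction argument with the induced morphism of conflations.
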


\begin{proof}
  We may assume that $\alpha$ is right minimal. Let $\beta\colon Z\to C$ be the pullback of $\alpha$ along $f$. Then $\beta$ is a deflation. By Proposition~\ref{prop:ras}, we have that $\beta$ is right almost split. Let $\gamma\colon E\to C$ be the right minimal version of $\beta$. We observe that $\gamma$ is a right almost split deflation and $\Ker\gamma$ is a direct summand of $\Ker\beta\simeq\Ker\alpha$.

  We claim that $\delta \colon \Ker\gamma \xrightarrow{\phi} E\xrightarrow{\gamma} C$ is an almost split conflation;
  see \cite[Proposition~II.4.4]{Auslander1978Functors}.
  Indeed, let $f \colon \Ker\gamma \to K'$ be a non-section. Assume that $f$ does not factor through $\phi$. Then the conflation $f.\delta$ does not split. In particular, the deflation $\gamma'$ does not split, and hence factors through $\gamma$. We then obtain the following commutative diagram
  \[\xymatrix{
    \llap{$\delta \colon$ } \Ker\gamma \ar[r]^\phi \ar[d]_f
    &E \ar[r]^\gamma \ar[d]_g
    &C \ar@{=}[d]
    \\
    \llap{$f.\delta \colon$\quad} K' \ar[r]^{\phi'} \ar@{-->}[d]_{f'}
    &E' \ar[r]^{\gamma'} \ar@{-->}[d]_{g'}
    &C \ar@{=}[d]
    \\
    \llap{$\delta \colon$ } \Ker\gamma \ar[r]^\phi
    &E \ar[r] ^\gamma
    &C.
  }\]
  Since $\gamma$ is right minimal, the morphism $g' \circ g$ is an isomorphism. We obtain that $f' \circ f$ is an isomorphism, which is a contradiction since $f$ is not a section. It follows that $f$ factors through $\phi$. We obtain that $\phi$ is left almost split, and then $\delta$ is an almost split conflation.
\end{proof}

\begin{corollary}\label{cor:min.det}
  A minimal right determiner of a deflation has no nonzero projective direct summands and lies in $\CC_r$.
\end{corollary}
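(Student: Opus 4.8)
The plan is to deduce Corollary~\ref{cor:min.det} from Lemma~\ref{lem:min.det} and Lemma~\ref{lem:exist.ass}. Let $\alpha\colon X\to Y$ be a deflation and let $C$ be a minimal right determiner of $\alpha$. By Lemma~\ref{lem:min.det}\eqref{item:lem:C:2}, we may write $C\simeq\bigoplus_{i=1}^t C_i$ with the $C_i$ pairwise non-isomorphic indecomposables, and for each $i$ there is a morphism $f_i\colon C_i\to Y$ which almost factors through $\alpha$. So it suffices to treat a single indecomposable direct summand: I would show that if $C'$ is an indecomposable object admitting a morphism $f\colon C'\to Y$ which almost factors through $\alpha$, then $C'$ is non-projective and lies in $\CC_r$.

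The key step is Lemma~\ref{lem:exist.ass}: it produces an almost split conflation $K\to E\to C'$ ending at $C'$. I would first record that $C'$ is non-projective, because the existence of a right almost split deflation $E\to C'$ that is not a retraction forces $C'$ to be non-projective (if $C'$ were projective, the identity $\Id_{C'}$ would have to factor through $E\to C'$, making it a retraction). Then, since $C'$ is indecomposable and non-projective and there is an almost split conflation ending at $C'$, Lemma~\ref{lem:ass}\eqref{item:lem:X:1} gives $C'\in\CC_r$. Applying this to each summand $C_i$ of the minimal right determiner $C$, and using that $\CC_r$ is an additive subcategory closed under finite direct sums, we conclude $C\in\CC_r$; and since no $C_i$ is projective, $C$ has no nonzero projective direct summand.

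I do not expect a serious obstacle here, since the substantive work is already contained in the two quoted lemmas. The only point requiring a small argument is the implication ``$C'$ admits an almost split conflation ending at it'' $\Rightarrow$ ``$C'$ is non-projective'', which is the standard observation that a projective object is never the target of an almost split conflation: the deflation in such a conflation is right almost split hence not a retraction, yet any deflation onto a projective object splits. One should also be slightly careful that a minimal right determiner is indeed a direct sum of indecomposables of the stated form — but this is exactly the content of Lemma~\ref{lem:min.det}\eqref{item:lem:C:2}, so it may be invoked directly. Hence the proof is short: reduce to indecomposable summands via Lemma~\ref{lem:min.det}, invoke Lemma~\ref{lem:exist.ass} to get an almost split conflation ending at each, note non-projectivity, and conclude membership in $\CC_r$ by Lemma~\ref{lem:ass}\eqref{item:lem:X:1}.
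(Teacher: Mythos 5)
Your proof is correct and follows essentially the same route as the paper: reduce to the indecomposable summands via Lemma~\ref{lem:min.det}, apply Lemma~\ref{lem:exist.ass} to obtain an almost split conflation ending at each summand, and conclude with Lemma~\ref{lem:ass}(\ref{item:lem:X:1}). The only (harmless) difference is your justification of non-projectivity: the paper notes directly that $f$ does not factor through the deflation $\alpha$, so a projective summand is impossible, whereas you deduce it from the non-split right almost split deflation in the almost split conflation; both observations are valid.
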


\begin{proof}
  Let $C$ be a minimal right determiner of a deflation $\alpha\colon X\to Y$. It is sufficient to show that each indecomposable direct summand $C'$ of $C$ is non-projective and lies in $\CC_r$. By Lemma~\ref{lem:min.det}, there exists some morphism $f\colon C'\to Y$ which almost factors through $\alpha$. We observe that $C'$ is not projective, since $f$ does not factor through $\alpha$. By Lemma~\ref{lem:exist.ass}, there exists an almost split conflation ending at $C'$. Then the result follows from Lemma~\ref{lem:ass}(\ref{item:lem:X:1}).
\end{proof}

It is well known that in the category of finitely generated modules over an artin algebra, each morphism is right $C$-determined for some object $C$; see \cite[Corollary~XI.1.4]{AuslanderReitenSmalo1995Representation} and \cite[Theorem~1]{Ringel2012Morphisms}. Here, we can only give a characterization for a deflation being right $C$-determined for some object $C$.

\begin{theorem}\label{thm:det}
  A deflation $\alpha\colon X\to Y$ is right $C$-determined for some object $C$ if and only if the intrinsic kernel of $\alpha$ lies in $\CC_l$.
\end{theorem}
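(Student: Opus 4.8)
The plan is to prove the two implications separately, after reducing to the case where $\alpha$ is right minimal. Replacing $\alpha$ by its right minimal version $\alpha'\colon X'\to Y$ — which is again a deflation, is right $C$-determined for some $C$ exactly when $\alpha$ is (right equivalent morphisms share this property), and whose kernel is by definition the intrinsic kernel of $\alpha$ — we may and do assume $\alpha$ is right minimal, and write $K=\Ker\alpha$ for the intrinsic kernel. For the ``if'' direction this is then immediate: if $K\in\CC_l$ then $\tau^-K$ is defined and lies in $\CC_r$, and Lemma~\ref{lem:ker.det} says precisely that $\alpha$ is right $\tau^-K$-determined; so $\alpha$ is right $C$-determined for $C=\tau^-K$.

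For the ``only if'' direction, suppose $\alpha$ is right $C$-determined for some $C$. I would first pass to a minimal right determiner. By Lemma~\ref{lem:min.det}(\ref{item:lem:C:2}), and since the indecomposables admitting a morphism to $Y$ that almost factors through $\alpha$ are finitely many direct summands of $C$, there is a minimal right determiner $C_0$, and $\alpha$ is right $C_0$-determined; by Corollary~\ref{cor:min.det} we have $C_0\in\CC_r$. Next, I would apply the existence theorem: put $H=\Im\Homc(C_0,\alpha)$, which is a left $\Gamma_{C_0}$-submodule of $\Homc(C_0,Y)$ containing $\CP(C_0,Y)$ (every projectively trivial morphism $C_0\to Y$ factors through the deflation $\alpha$). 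By Theorem~\ref{thm:exist} applied to $C_0$ and $H$ there is a deflation $\beta\colon W\to Y$ that is right $C_0$-determined, with $\Ker\beta\in\add(\tau C_0)$ and $\Im\Homc(C_0,\beta)=H$.

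The crucial step is to compare $\alpha$ and $\beta$. Since $\Im\Homc(C_0,\beta)=H=\Im\Homc(C_0,\alpha)$, for each $h\colon C_0\to W$ the morphism $\beta\circ h$ lies in $\Im\Homc(C_0,\alpha)$ and hence factors through $\alpha$; as $\alpha$ is right $C_0$-determined, $\beta$ factors through $\alpha$, and symmetrically $\alpha$ factors through $\beta$. Thus $\alpha$ and $\beta$ are right equivalent. Because $\alpha$ is right minimal and $\beta$ is right equivalent to it, $\alpha$ is the right minimal version of $\beta$ (up to isomorphism), so $K=\Ker\alpha$ is a direct summand of $\Ker\beta$ — passing from a deflation to its right minimal version only strips off a summand of the source that lies in the kernel. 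Finally $\tau C_0\in\CC_l$ since $C_0\in\CC_r$, so $\add(\tau C_0)\subseteq\CC_l$, and $\CC_l$ is closed under direct summands; hence $K\in\CC_l$, which completes the argument.

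I expect the main obstacle to be the ``only if'' direction, and within it the observation that all the obstructions to $K\in\CC_l$ are packaged into the single comparison object $\tau C_0$: the key point is that the one invariant $\Im\Homc(C_0,-)$ pins down $\alpha$ up to right equivalence among deflations, so the deflation $\beta$ produced by Theorem~\ref{thm:exist} is genuinely right equivalent to $\alpha$, and then minimality of $\alpha$ realizes $K$ as a direct summand of $\Ker\beta\in\add(\tau C_0)$. The remaining points are routine: that $\CC$ has split idempotents, so the right minimal version of a deflation merely strips a summand of the source from the kernel, and that $\add(\tau C_0)\subseteq\CC_l$, which holds because $C_0\in\CC_r$ (it has no nonzero projective summand, but one only needs $C_0\in\CC_r$ here).
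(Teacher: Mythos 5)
Your proposal is correct and follows essentially the same route as the paper: the ``if'' direction via the right minimal version and Lemma~\ref{lem:ker.det}, and the ``only if'' direction via a minimal right determiner $C_0\in\CC_r$ (Corollary~\ref{cor:min.det}), Theorem~\ref{thm:exist} applied to $H=\Im\Homc(C_0,\alpha)$, and the comparison showing $\alpha$ and $\beta$ are right equivalent, so the intrinsic kernel is a direct summand of $\Ker\beta\in\add(\tau C_0)\subseteq\CC_l$. The only difference is cosmetic: you reduce to the right minimal case at the outset, whereas the paper invokes the intrinsic kernel directly at the end.
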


\begin{proof}
  For the sufficiency, let $\alpha'\colon X'\to Y$ be the right minimal version of $\alpha$. We observe that $\alpha'$ is still a deflation and $\Ker\alpha'\in\CC_l$. By Lemma~\ref{lem:ker.det}, we have that $\alpha'$ is right $\tau^-(\Ker\alpha')$-determined. It follows that $\alpha$ is right $\tau^-(\Ker\alpha')$-determined since $\alpha$ and $\alpha'$ are right equivalent.

  For the necessity, we may assume that $C$ is a minimal right determiner of $\alpha$. By Corollary~\ref{cor:min.det}, we have $C\in\CC_r$. We observe that $\Im\Homc(C,\alpha)$ is a $\Gamma_C$-submodule of $\Homc(C,Y)$. Since $\alpha$ is a deflation, we have $\CP(C,Y)\subseteq\Im\Homc(C,\alpha)$. By Theorem~\ref{thm:exist}, there exists some deflation $\alpha'\colon X'\to Y$, which is right $C$-determined such that $\Ker\alpha'\in\add(\tau C)$ and $\Im\Homc(C,\alpha)=\Im\Homc(C,\alpha')$. We observe that $\alpha\circ f$ factors through $\alpha'$ and $\alpha'\circ f'$ factors through $\alpha$ for any $f\colon C\to X$ and $f'\colon C\to X'$. Since $\alpha$ and $\alpha'$ are right $C$-determined, we have that $\alpha$ and $\alpha'$ factor through each other. It follows that $\alpha$ and $\alpha'$ are right equivalent. The intrinsic kernel of $\alpha$ is also the intrinsic kernel of $\alpha'$, which is a direct summand of $\Ker\alpha'$. Then the result follows since $\Ker \alpha' \in \add (\tau C)$ and $\tau C \in \CC_l$.
\end{proof}

\begin{example}\label{ex:det}
  Let $Q$ be the following infinite quiver
  \[
    \underset1\circ \longrightarrow
    \underset2\circ \longrightarrow
    \cdots \longrightarrow
    \underset n\circ \longrightarrow\cdots.
  \]
  We consider the representations of $Q$ over a field. For each $i\geq1$, we denote by $P_i$ the indecomposable projective representation and by $S_i$ the simple representation corresponding to $i$. Let $\CC$ be the category of finitely presented representations. It is well known that $\CC$ is a Hom-finite Krull-Schmidt abelian category.

  For each $i\geq1$, we consider the projective cover $f_i\colon P_i\to S_i$. We have $\Ker f_i\simeq P_{i+1}$. By \cite[Proposition~4.4]{JiaoGeneralized}, we have that $P_{i+1}\not\in\CC_l$. By Theorem~\ref{thm:det}, we have that $f_i$ is not right $C$-determined for any object $C$.

  By Lemma~\ref{lem:exist.ass}, for any indecomposable object $C$, any morphism $g\colon C\to S_i$ does not almost factor through $f_i$. We observe that $f_i$ is not right 0-determined. Then we have that the condition ``$f$ is right $C$-determined for some object $C$'' for Lemma~\ref{lem:min.det}(\ref{item:lem:C:2}) is necessary.
\end{example}

\section{More descriptions of objects in $\CC_r$ and $\CC_l$}

Let $\CC$ be a Hom-finite Krull-Schmidt exact category. We will give some characterizations for an object lying in $\CC_r$ or $\CC_l$ via morphisms determined by objects.

\begin{proposition}\label{prop:C_l}
  Let $K$ be an object without nonzero injective direct summands. Then $K$ lies in $\CC_l$ if and only if there exists some deflation $\alpha\colon X\to Y$, which is right $C$-determined for some object $C$ such that $K$ is the intrinsic kernel of $\alpha$.
\end{proposition}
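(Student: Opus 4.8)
The plan is to prove both directions by extracting the relevant almost split conflations. For the "only if" direction, suppose $K \in \CC_l$. Decompose $K = K_1 \oplus \cdots \oplus K_n$ into indecomposables; since $K$ has no nonzero injective summands and $\CC_l$ is closed under summands, each $K_i$ is a non-injective indecomposable in $\CC_l$. By Lemma~\ref{lem:ass}(\ref{item:lem:X:2}), for each $i$ there is an almost split conflation $\eta_i \colon K_i \to E_i \xrightarrow{\alpha_i} C_i$ with $C_i \in \CC_r$, and moreover $C_i \simeq \tau^- K_i$ in $\underline{\CC}$. Each $\alpha_i$ is a deflation; by Proposition~\ref{prop:ras} (or directly, since $\alpha_i$ is right almost split) it is right minimal, and it is right $\tau^-(\Ker \alpha_i) = \tau^-(K_i)$-determined by Lemma~\ref{lem:ker.det}. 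Taking $\alpha = \bigoplus_i \alpha_i \colon \bigoplus_i E_i \to \bigoplus_i C_i$, this is a deflation which is right $(\bigoplus_i C_i)$-determined by Lemma~\ref{lem:det.sum}, and it is right minimal because each $\alpha_i$ is (a direct sum of right minimal morphisms between objects with no common structure is right minimal — this uses that $\alpha$ is right equivalent to its right minimal version and that the kernel is determined up to summands; I would instead argue directly that $\Ker \alpha = \bigoplus_i K_i = K$ is the intrinsic kernel since $\alpha$ is already right minimal). Thus $K$ is the intrinsic kernel of $\alpha$, as required.

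For the "if" direction, suppose $\alpha \colon X \to Y$ is a deflation which is right $C$-determined for some object $C$, with intrinsic kernel $K$. Let $\alpha' \colon X' \to Y$ be the right minimal version of $\alpha$; then $K = \Ker \alpha'$ and $\alpha'$ is still a deflation which is right $C$-determined. By Theorem~\ref{thm:det}, the intrinsic kernel of $\alpha$ lies in $\CC_l$; that is, $K \in \CC_l$. This direction is essentially immediate from Theorem~\ref{thm:det}.

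The main obstacle I anticipate is in the "only if" direction: ensuring that the direct sum $\alpha = \bigoplus_i \alpha_i$ is right minimal so that $K$ is literally its intrinsic kernel rather than merely a summand of a kernel. A clean way around this is to observe that each $\alpha_i$, being right almost split, can be taken right minimal (a right almost split morphism is automatically right minimal when $\End_\CC(C_i)$ is local, which it is since $C_i$ is indecomposable), and that a finite direct sum of right minimal morphisms is right minimal: if $g = (g_{ij}) \in \Endc(\bigoplus E_i)$ satisfies $\alpha \circ g = \alpha$, then comparing components and using that each $C_i$ is indecomposable with $\Homc(C_i, C_j) \subseteq \rad$ for $i \ne j$ (as the $\alpha_i$ have pairwise non-isomorphic indecomposable targets $C_i \simeq \tau^- K_i$, distinct since the $K_i$ are — though here one must be careful if the $K_i$ repeat, in which case one groups equal summands and argues on multiplicity spaces) forces the diagonal blocks $g_{ii}$ to be automorphisms and hence $g$ to be an automorphism. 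Alternatively, and more robustly, one simply cites that the intrinsic kernel of $\bigoplus_i \alpha_i$ equals $\bigoplus_i (\text{intrinsic kernel of } \alpha_i) = \bigoplus_i K_i = K$, which follows from the behaviour of right minimal versions under direct sums via the projectivization equivalence $F \colon \add(X \oplus Y) \to \proj \Gamma_{X\oplus Y}$ used in Section~5.
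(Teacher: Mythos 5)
Your proof is correct and follows essentially the same route as the paper's: the ``if'' direction is exactly the paper's appeal to Theorem~\ref{thm:det}, and the ``only if'' direction decomposes $K$ into non-injective indecomposables $K_i$, takes a conflation starting at each $K_i$, and identifies $K$ as the intrinsic kernel of the direct sum of the resulting deflations. The one real difference is the choice of conflation: you invoke Lemma~\ref{lem:ass}(\ref{item:lem:X:2}) to obtain almost split conflations $K_i\to E_i\to C_i$, whereas the paper takes an arbitrary non-split conflation starting at $K_i$ (which exists simply because $K_i$ is non-injective) and applies Lemma~\ref{lem:ker.det} once to the direct sum, whose kernel is $K\in\CC_l$; the paper's version avoids Lemma~\ref{lem:ass} altogether, while yours additionally names the determiner $\bigoplus_i\tau^-K_i$ and pins down the third terms. (If you do argue summandwise, note that Lemma~\ref{lem:det.sum} needs a fixed determiner, so you implicitly enlarge each $C_i$ to $\bigoplus_j C_j$, which is harmless; applying Lemma~\ref{lem:ker.det} directly to the sum sidesteps this.) Two caveats on your justifications, neither fatal. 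First, ``right almost split implies right minimal when the target has local endomorphism ring'' is false as stated: appending a zero summand to the source of a right almost split morphism keeps it right almost split but destroys right minimality. What you actually need is that the deflation occurring in an almost split conflation is right minimal, and this holds because a deflation sitting in a non-split conflation with indecomposable kernel is right minimal --- the same observation the paper makes for its $\alpha_i$. Second, your worry about repeated $K_i$ and your matrix argument via $\Homc(C_i,C_j)$ being radical are unnecessary: in a Krull-Schmidt category a finite direct sum of right minimal morphisms is always right minimal (equivalently, right minimal versions are compatible with direct sums), with no hypothesis on the targets; this is precisely the unproved standard fact the paper also relies on when it asserts that $\bigoplus_i\alpha_i$ is right minimal, and your fallback via the projectivization equivalence is a legitimate way to see it.
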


\begin{proof}
  The sufficiency follows from Theorem~\ref{thm:det}. For the necessity, we assume $K\in\CC_l$. Decompose $K$ as the direct sum of indecomposable objects $K_1,K_2,\dots,K_n$. We have that each $K_i$ is non-injective. Then for each $i=1,2,\dots,n$, there exists a non-split conflation $K_i\to X_i \xrightarrow{\alpha_i} Y_i$. We observe that $\bigoplus_{i=1}^n \alpha_i$ is a deflation. By Lemma~\ref{lem:ker.det}, we have that $\bigoplus_{i=1}^n \alpha_i$ is right $\tau^-K$-determined. We observe that $\alpha_i$ is right minimal. Then $\bigoplus_{i=1}^n \alpha_i$ is also right minimal. It follows that $K$ is the intrinsic kernel of $\bigoplus_{i=1}^n \alpha_i$.
\end{proof}

The following lemma is the converse of Theorem~\ref{thm:exist}.

\begin{lemma}\label{lem:C_r}
  Let $C$ be an object. Assume that for each object $Y$ and each $\Gamma_C$-submodule $H$ of $\Homc(C,Y)$ satisfying $\CP(C,Y)\subseteq H$, there exists some deflation $\alpha\colon X\to Y$, which is right $C$-determined such that $H=\Im\Homc(C,\alpha)$. Then the object $C$ lies in $\CC_r$.
\end{lemma}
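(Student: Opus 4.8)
The plan is to show that every indecomposable non-projective direct summand of $C$ lies in $\CC_r$; since $\CC_r$ contains the projectives and is closed under finite direct sums (a direct sum of representable functors is representable), this gives $C\in\CC_r$. So I would write $C=\bigoplus_{i=1}^n C_i$ with each $C_i$ indecomposable, fix an indecomposable non-projective summand $C'$ with structure morphisms $i'\colon C'\to C$ and $p'\colon C\to C'$, and aim to produce an almost split conflation ending at $C'$, after which Lemma~\ref{lem:ass}(\ref{item:lem:X:1}) applies.

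The key step is to feed the submodule $H:=\radc(C,C')$ of $\Homc(C,C')$ into the hypothesis, taking $Y:=C'$. First I would check $H$ is admissible: it is a left $\Gamma_C$-submodule since $\radc$ is an ideal of $\CC$, and it contains $\CP(C,C')$. For the latter, let $f\colon C\to C'$ be projectively trivial and $g\colon C'\to C$ arbitrary; then $f\circ g\in\CP(C',C')$, and this composite cannot be invertible (otherwise $\Id_{C'}\in\CP(C',C')$, contradicting that $C'$ is non-projective), so $f\circ g\in\rad\Endc(C')$ as $\Endc(C')$ is local; by (\ref{eq:rad}) this gives $f\in\radc(C,C')$. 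The hypothesis now yields a right $C$-determined deflation $\alpha\colon X\to C'$ with $\Im\Homc(C,\alpha)=\radc(C,C')$; note that $\Im\Homc(C,\alpha)$ is exactly the set of morphisms $C\to C'$ that factor through $\alpha$.

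Next I would show that $\Id_{C'}$ almost factors through $\alpha$ in the sense of Section~2. It does not factor through $\alpha$: otherwise $p'=\Id_{C'}\circ p'$ would factor through $\alpha$, i.e.\ $p'\in\radc(C,C')$, which is absurd since $p'\circ i'=\Id_{C'}\notin\rad\Endc(C')$ and hence $p'\notin\radc(C,C')$ by (\ref{eq:rad}). Moreover, for any object $T$, any $h\in\radc(T,C')$ and any $v\colon C\to T$, the composite $h\circ v$ lies in the ideal $\radc(C,C')=\Im\Homc(C,\alpha)$ and so factors through $\alpha$; since $\alpha$ is right $C$-determined, $h=\Id_{C'}\circ h$ factors through $\alpha$. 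Thus $\Id_{C'}$ almost factors through $\alpha$, and Lemma~\ref{lem:exist.ass}, applied to the deflation $\alpha\colon X\to C'$ and the morphism $\Id_{C'}\colon C'\to C'$, produces an almost split conflation ending at $C'$. Hence $C'\in\CC_r$, and therefore $C\in\CC_r$.

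The only genuinely non-routine point is the choice $H=\radc(C,C')$ together with the observation that it forces $\alpha$ to behave like a right almost split morphism onto $C'$, so that $\Id_{C'}$ almost factors through it; the verification that $H$ is admissible, the ``almost factors through'' check, and the passage to an almost split conflation are then direct consequences of (\ref{eq:rad}), the locality of $\Endc(C')$, and Lemmas~\ref{lem:exist.ass} and~\ref{lem:ass}.
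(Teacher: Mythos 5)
Your proposal is correct and takes essentially the same route as the paper's own proof: reduce to an indecomposable non-projective direct summand $C'$, apply the hypothesis to $H=\radc(C,C')$, show that $\Id_{C'}$ almost factors through the resulting right $C$-determined deflation, and conclude via Lemma~\ref{lem:exist.ass} and Lemma~\ref{lem:ass}(\ref{item:lem:X:1}). The only differences are cosmetic (you verify $\CP(C,C')\subseteq\radc(C,C')$ and the non-factorization of $\Id_{C'}$ via (\ref{eq:rad}) and the retraction $p'$, where the paper argues via non-retractions and properness of $\radc(C,C')$).
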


\begin{proof}
  It is sufficient to show that each non-projective indecomposable direct summand $C'$ of $C$ lies in $\CC_r$. Then each $f\in\CP(C,C')$ is not a retraction. We observe that $\radc(C,C')$ is formed by non-retractions. We obtain $\CP(C,C')\subseteq\radc(C,C')$. We observe that $\radc(C,C')$ is a $\Gamma_C$-submodule of $\Homc(C,C')$. By the assumption, there exists some deflation $\alpha\colon X\to C'$, which is right $C$-determined such that $\radc(C,C')=\Im\Homc(C,\alpha)$.

  We claim that $\Id_{C'}$ almost factors through $\alpha$. Indeed, the deflation $\alpha$ is not a retraction, since $\Im\Homc(C,\alpha) = \radc(C,C')$ is a proper submodule of $\Homc(C,C')$. It follows that $\Id_{C'}$ does not factor through $\alpha$. Let $f\colon T\to C'$ be a morphism in $\radc(T,C')$. For each $g\colon C\to T$, the morphism $f\circ g$ lies in $\radc(C,C')=\Im\Homc(C,\alpha)$. It follows that $f\circ g$ factors through $\alpha$. Since $\alpha$ is right $C$-determined, we have that $f$ factors through $\alpha$. It follows that $\Id_{C'}$ almost factors through $\alpha$. By Lemma~\ref{lem:exist.ass}, there exists an almost split conflation ending at $C'$. Then the result follows from Lemma~\ref{lem:ass}(\ref{item:lem:X:1}).
\end{proof}

Collecting the results obtained so far, we list some some characterizations for an object lying in $\CC_r$. We mention that the equivalence between Theorem~\ref{thm:C}(\ref{item:thm:C:5}) and Theorem~\ref{thm:C}(\ref{item:thm:C:6}) is somehow surprising.

\begin{theorem}\label{thm:C}
  Let $\CC$ be a Hom-finite Krull-Schmidt exact category and let $C\in\CC$. The following statements are equivalent.
  \begin{enumerate}
    \item\label{item:thm:C:1}
      The object $C$ lies in $\CC_r$.
    \item\label{item:thm:C:2}
      For each object $Y$ and each $\Gamma_C$-submodule $H$ of $\Homc(C,Y)$ satisfying $\CP(C,Y)\subseteq H$, there exists some deflation $\alpha\colon X\to Y$, which is right $C$-determined such that $H=\Im\Homc(C,\alpha)$.
    \item\label{item:thm:C:3}
      Each inflation $\alpha\colon X\to Y$ with $\Cok\alpha\in\add C$ is left $K$-determined for some object $K$.
  \end{enumerate}
  If moreover $C$ is non-projective indecomposable, they are equivalent to the following statements.
  \begin{enumerate}
    \setcounter{enumi}{3}
    \item\label{item:thm:C:4}
      There exists an inflation $\alpha\colon X\to Y$ whose intrinsic cokernel is $C$ such that $\alpha$ is left $K$-determined for some object $K$.
    \item\label{item:thm:C:5}
      There exists an almost split conflation ending at $C$.
    \item\label{item:thm:C:6}
      There exists a non-split deflation which is right $C$-determined.
    \item\label{item:thm:C:7}
      There exists a deflation $\alpha\colon X\to Y$ and a morphism $f\colon C\to Y$ such that $f$ almost factors through $\alpha$.
\end{enumerate}
\end{theorem}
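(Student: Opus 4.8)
The plan is to establish $(1)\Leftrightarrow(2)\Leftrightarrow(3)$ for arbitrary $C$ first, and then, under the extra hypothesis that $C$ is non-projective indecomposable, to run the cycle $(1)\Leftrightarrow(5)\Rightarrow(6)\Rightarrow(7)\Rightarrow(5)$ together with the separate equivalence $(1)\Leftrightarrow(4)$. Two facts will be used throughout. First, $\CC_r$ is closed under finite direct sums and under direct summands, which is immediate from its description via the representability of the functors $D\Extc(X,-)$ on $\overline\CC$ together with the splitting of idempotents; hence $\add C\subseteq\CC_r$ whenever $C\in\CC_r$. Second, I will invoke the dual of Theorem~\ref{thm:det}, obtained by applying that theorem in the opposite exact category $\CC^{\op}$ — in which deflations become inflations, right determiners become left determiners, intrinsic kernels become intrinsic cokernels, and $(\CC^{\op})_l=\CC_r$ — which reads: \emph{an inflation $\alpha\colon X\to Y$ is left $K$-determined for some object $K$ if and only if the intrinsic cokernel of $\alpha$ lies in $\CC_r$} (recall that the intrinsic cokernel is a direct summand of the cokernel, dually to the fact used in the proof of Theorem~\ref{thm:det}).

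For the first block, $(1)\Rightarrow(2)$ is exactly Theorem~\ref{thm:exist} and $(2)\Rightarrow(1)$ is Lemma~\ref{lem:C_r}. For $(1)\Rightarrow(3)$, given an inflation $\alpha$ with $\Cok\alpha\in\add C$, its intrinsic cokernel is a direct summand of $\Cok\alpha$, hence lies in $\add C\subseteq\CC_r$, so $\alpha$ is left $K$-determined for some $K$ by the dual of Theorem~\ref{thm:det}. For $(3)\Rightarrow(1)$ it suffices to show each indecomposable direct summand $C'$ of $C$ lies in $\CC_r$; if $C'$ is projective this is built into the definition of $\CC_r$, and otherwise $\Extc(C',Z)\neq0$ for some $Z$, giving a non-split conflation $Z\xrightarrow{\iota}E\to C'$. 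Since $\Cok\iota=C'\in\add C$, statement $(3)$ makes $\iota$ left $K$-determined, so by the dual of Theorem~\ref{thm:det} the intrinsic cokernel of $\iota$ lies in $\CC_r$; this intrinsic cokernel is a direct summand of $\Cok\iota=C'$, and it is nonzero — otherwise $\iota$ would be left equivalent to an isomorphism, hence a split monomorphism, contradicting non-splitness — so it is isomorphic to the indecomposable $C'$, whence $C'\in\CC_r$.

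Now assume $C$ is non-projective indecomposable. Then $(1)\Leftrightarrow(5)$ is Lemma~\ref{lem:ass}(\ref{item:lem:X:1}). For $(5)\Rightarrow(6)$, an almost split conflation $K\xrightarrow{\iota}E\xrightarrow{\gamma}C$ has $K\in\CC_l$ and $C\simeq\tau^-K$ in $\underline\CC$; since $\Ker\gamma=K\in\CC_l$, Lemma~\ref{lem:ker.det} gives that $\gamma$ is right $\tau^-K$-determined, hence right $C$-determined by Proposition~\ref{prop:det}, and $\gamma$ is non-split because right almost split morphisms are not retractions. For $(6)\Rightarrow(7)$, let $\alpha\colon X\to Y$ be a non-split deflation that is right $C$-determined, and let $C_0$ be its minimal right determiner, which exists by Lemma~\ref{lem:min.det}; then $C_0$ is a direct summand of the indecomposable $C$. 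If $C_0=0$, then $\alpha$ is right $0$-determined, so $\Id_Y$ factors through $\alpha$, making $\alpha$ a retraction and the conflation $\Ker\alpha\to X\to Y$ split, contradicting that $\alpha$ is non-split; hence $C_0\simeq C$, and Lemma~\ref{lem:min.det}(\ref{item:lem:C:2}) then yields a morphism $f\colon C\to Y$ almost factoring through $\alpha$, which is $(7)$. For $(7)\Rightarrow(5)$, apply Lemma~\ref{lem:exist.ass}. Finally, for $(1)\Leftrightarrow(4)$: if $C\in\CC_r$, an almost split conflation $K\xrightarrow{\iota}E\to C$ exists by Lemma~\ref{lem:ass}(\ref{item:lem:X:1}) and has $\Cok\iota=C$, whose intrinsic cokernel is a nonzero direct summand of the indecomposable $C$, hence isomorphic to $C\in\CC_r$, so $\iota$ is left $K$-determined by the dual of Theorem~\ref{thm:det} — this is $(4)$; conversely $(4)$ together with the dual of Theorem~\ref{thm:det} says $C$ is the intrinsic cokernel of a left $K$-determined inflation and therefore lies in $\CC_r$.

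The main obstacles are the two places where non-splitness is essential and must be invoked carefully: in $(3)\Rightarrow(1)$ and $(1)\Rightarrow(4)$ one needs the intrinsic cokernel of the non-split inflation to be \emph{nonzero}, so that indecomposability pins it down to $C'$, resp. $C$, rather than leaving it a possibly-trivial summand; and in $(6)\Rightarrow(7)$ one needs non-splitness precisely to exclude the degenerate minimal right determiner $0$. Apart from these points, the argument is a bookkeeping assembly of Theorem~\ref{thm:exist}, Lemmas~\ref{lem:C_r}, \ref{lem:ass}, \ref{lem:ker.det}, \ref{lem:min.det}, \ref{lem:exist.ass}, Proposition~\ref{prop:det}, and the dual of Theorem~\ref{thm:det}, the only further care being the verification that $\CC_r$ is closed under direct summands.
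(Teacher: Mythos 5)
Your proposal is correct and follows essentially the same route as the paper: $(1)\Leftrightarrow(2)$ via Theorem~\ref{thm:exist} and Lemma~\ref{lem:C_r}, $(1)\Leftrightarrow(3)$ via the dual of Theorem~\ref{thm:det} (the paper uses the dual of Lemma~\ref{lem:ker.det} for the forward direction, which amounts to the same machinery), and the cycle $(5)\Rightarrow(6)\Rightarrow(7)\Rightarrow(5)$ via Lemmas~\ref{lem:ker.det}, \ref{lem:min.det} and \ref{lem:exist.ass} together with $(1)\Leftrightarrow(5)$ from Lemma~\ref{lem:ass}. Your handling of $(1)\Leftrightarrow(4)$ simply inlines the content of the dual of Proposition~\ref{prop:C_l}, which is what the paper cites, and your explicit attention to the nonzero intrinsic cokernel and the exclusion of the zero minimal determiner matches (and slightly elaborates) the paper's implicit use of non-splitness.
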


\begin{proof}
  By Theorem~\ref{thm:exist} and Lemma~\ref{lem:C_r}, we have
  ``(\ref{item:thm:C:1}) $\Leftrightarrow$ (\ref{item:thm:C:2})''.

  By the dual of Lemma~\ref{lem:ker.det}, we have
  ``(\ref{item:thm:C:1}) $\Rightarrow$ (\ref{item:thm:C:3})''.
  For each indecomposable non-projective direct summand $C'$ of $C$, there exists some non-split inflation $\alpha \colon X \to Y$ such that $\Cok \alpha \simeq C'$. We observe that $\alpha$ is left minimal, since $\alpha$ is non-split and $C'$ is indecomposable. By the dual of Theorem~\ref{thm:det}, we have that $C' \in \CC_r$. Then
  ``(\ref{item:thm:C:3}) $\Rightarrow$ (\ref{item:thm:C:1})''
  follows.

  Now, we assume that $C$ is non-projective indecomposable.
  Then the dual of Proposition~\ref{prop:C_l} implies
  ``(\ref{item:thm:C:1}) $\Leftrightarrow$ (\ref{item:thm:C:4})'',
  and Lemma~\ref{lem:ass}(\ref{item:lem:X:1}) implies
  ``(\ref{item:thm:C:1}) $\Leftrightarrow$ (\ref{item:thm:C:5})''.

  It is well known that the right almost split deflation ending at $C$ is non-split and right $C$-determined; also see Lemma~\ref{lem:ker.det} and Proposition~\ref{prop:det}. Then we have
  ``(\ref{item:thm:C:5}) $\Rightarrow$ (\ref{item:thm:C:6})''.
  Let $\alpha$ be a non-split right $C$-determined deflation. We observe by Lemma~\ref{lem:min.det} that $C$ is a minimal right determiner of $\alpha$. By Lemma~\ref{lem:min.det}, we have
  ``(\ref{item:thm:C:6}) $\Rightarrow$ (\ref{item:thm:C:7})''.
  By Lemma~\ref{lem:exist.ass}, we have
  ``(\ref{item:thm:C:7}) $\Rightarrow$ (\ref{item:thm:C:5})''.
\end{proof}

\section{Exact categories having Auslander-Reiten duality}

Let $\CC$ be a Hom-finite Krull-Schmidt exact category. Following \cite[Definition~2.6]{Krause2013Morphisms} and \cite[Definition~3.1]{ChenLe2015note}, we introduce the following notion.

\begin{definition}\label{def:det}
  An exact category $\CC$ is said to \emph{have right stably determined deflations} if for each object $Y$ the following conditions hold.
  \begin{enumerate}
    \item\label{item:def:1}
      Each deflation ending at $Y$ is right $C$-determined for some object $C$.
    \item\label{item:def:2}
      For each object $C$ and each $\Gamma_C$-submodule $H$ of $\Homc(C,Y)$ such that $\CP(C,Y)\subseteq H$, there exists some deflation $\alpha\colon X\to Y$, which is right $C$-determined such that $H=\Im\Homc(C,\alpha)$.
    \qed
  \end{enumerate}
\end{definition}

Dually, an exact category $\CC$ is said to \emph{have left stably determined inflations} if the opposite category $\CC^{\op}$ has right stably determined deflations.

We give the following characterizations for an exact category having Auslander-Reiten duality in the sense of \cite{LenzingZuazua2004Auslander}.

\begin{theorem}\label{thm:AR}
  Let $\CC$ be a Hom-finite Krull-Schmidt exact category. The following statements are equivalent.
  \begin{enumerate}
    \item\label{item:thm:AR:1}
      $\CC$ has Auslander-Reiten duality.
    \item\label{item:thm:AR:2}
      $\CC$ has right stably determined deflations.
    \item\label{item:thm:AR:3}
      $\CC$ has left stably determined inflations.
  \end{enumerate}
\end{theorem}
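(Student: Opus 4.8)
The plan is to prove that each of (\ref{item:thm:AR:1}), (\ref{item:thm:AR:2}) and (\ref{item:thm:AR:3}) is equivalent to the single condition that $\CC_r=\CC$ and $\CC_l=\CC$. For (\ref{item:thm:AR:1}), recall from the construction in Section~4 (due to \cite{JiaoGeneralized}) that $\tau X$ is defined precisely when $X\in\CC_r$, in which case $\Homco(-,\tau X)\simeq D\Extc(X,-)$ naturally, and dually $\tau^-Y$ is defined precisely when $Y\in\CC_l$, with $\Homcu(\tau^-Y,-)\simeq D\Extc(-,Y)$. Hence the isomorphisms $\Homco(Y,\tau X)\simeq D\Extc(X,Y)\simeq\Homcu(\tau^-Y,X)$ appearing in the definition of having Auslander--Reiten duality are defined and hold for all objects $X$ and $Y$ if and only if $\CC_r=\CC$ and $\CC_l=\CC$; conversely, in that situation these isomorphisms are bifunctorial by the construction in \cite{JiaoGeneralized}. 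This establishes the equivalence of (\ref{item:thm:AR:1}) with the stated condition.

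For (\ref{item:thm:AR:2}), denote by (a) and (b) the conditions (\ref{item:def:1}) and (\ref{item:def:2}) of Definition~\ref{def:det}. For a fixed object $C$, condition (b), quantified over all objects $Y$ and all admissible submodules $H$, is exactly statement~(\ref{item:thm:C:2}) of Theorem~\ref{thm:C}, hence is equivalent to $C\in\CC_r$; letting $C$ range over all objects, (b) holds if and only if $\CC_r=\CC$. By Theorem~\ref{thm:det}, condition (a) says that the intrinsic kernel of every deflation lies in $\CC_l$, which certainly holds when $\CC_l=\CC$. For the converse, assume (a) and let $K$ be an indecomposable object. If $K$ is injective, then $K\in\CC_l$ by the definition of $\CC_l$. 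If $K$ is not injective, pick a non-split conflation $K\to E\xrightarrow{\alpha}Y$; by (a) and Theorem~\ref{thm:det}, the intrinsic kernel $K'$ of $\alpha$ lies in $\CC_l$. Since $K'$ is a direct summand of $\Ker\alpha=K$ and $K$ is indecomposable, either $K'\simeq K$ or $K'=0$; the latter is impossible, since then the right minimal version of $\alpha$ would have zero kernel, hence be an isomorphism, forcing $\alpha$ to be a retraction and the conflation to split. Therefore $K'\simeq K$ and $K\in\CC_l$, so $\CC_l=\CC$ as $\CC$ is Krull--Schmidt. Thus (a) is equivalent to $\CC_l=\CC$, and (\ref{item:thm:AR:2}), being the conjunction of (a) and (b), is equivalent to ``$\CC_r=\CC$ and $\CC_l=\CC$''.

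For (\ref{item:thm:AR:3}), note that by definition (\ref{item:thm:AR:3}) holds for $\CC$ if and only if (\ref{item:thm:AR:2}) holds for $\CC^{\op}$. An almost split conflation in $\CC^{\op}$ is an almost split conflation in $\CC$ with its first and third terms interchanged, and projectives and injectives are interchanged under passing to $\CC^{\op}$; hence $(\CC^{\op})_r=(\CC_l)^{\op}$ and $(\CC^{\op})_l=(\CC_r)^{\op}$. Applying the equivalence just proved for (\ref{item:thm:AR:2}) to $\CC^{\op}$ therefore shows that (\ref{item:thm:AR:3}) too is equivalent to ``$\CC_r=\CC$ and $\CC_l=\CC$''. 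Combining the three equivalences yields the theorem.

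The main obstacle is the implication ``(a) $\Rightarrow\CC_l=\CC$'' in the analysis of (\ref{item:thm:AR:2}): condition (a) only directly controls the intrinsic kernels of deflations, and one must upgrade this to a statement about arbitrary objects, which requires realizing every non-injective indecomposable object as the intrinsic kernel of a suitable deflation (via a non-split conflation, together with the fact that the intrinsic kernel is a direct summand of the ordinary kernel). A lesser point that needs care is fixing, in (\ref{item:thm:AR:1}), the exact reading of ``$\CC$ has Auslander--Reiten duality'' for an exact category and checking that it coincides with ``$\CC_r=\CC$ and $\CC_l=\CC$''.
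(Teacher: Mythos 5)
Your proposal is correct and takes essentially the same route as the paper: both reduce all three conditions to ``$\CC_l=\CC=\CC_r$'', handle condition~(\ref{item:def:2}) of Definition~\ref{def:det} via the equivalence of Theorem~\ref{thm:C}(\ref{item:thm:C:1}) and (\ref{item:thm:C:2}), and obtain (\ref{item:thm:AR:3}) by duality. The only (minor) difference is that for condition~(\ref{item:def:1}) you re-derive directly from Theorem~\ref{thm:det} what the paper gets by citing the dual of the equivalence of Theorem~\ref{thm:C}(\ref{item:thm:C:1}) and (\ref{item:thm:C:3}), namely that every non-injective indecomposable occurs as the intrinsic kernel of a non-split deflation; that inlined argument is sound.
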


\begin{proof}
  We observe that $\CC$ has Auslander-Reiten duality in the sense of \cite{LenzingZuazua2004Auslander} if and only if $\CC_l=\CC=\CC_r$.
  The dual of the equivalence between Theorem~\ref{thm:C}(\ref{item:thm:C:1}) and Theorem~\ref{thm:C}(\ref{item:thm:C:3}) implies that an object $K$ lies in $\CC_l$ if and only if each deflation whose kernel lies in $\add K$ is right $C$-determined for some $C$. Combining this with the equivalence between Theorem~\ref{thm:C}(\ref{item:thm:C:1}) and Theorem~\ref{thm:C}(\ref{item:thm:C:2}), we have
  ``(\ref{item:thm:AR:1}) $\Leftrightarrow$ (\ref{item:thm:AR:2})''.

  By duality, we have
  ``(\ref{item:thm:AR:1}) $\Leftrightarrow$ (\ref{item:thm:AR:3})''.
\end{proof}

\begin{remark}
  Compared with \cite[Theorem~3.4]{ChenLe2015note}, it is somewhat surprising that Theorem~\ref{thm:AR}(\ref{item:thm:AR:2}) and Theorem~\ref{thm:AR}(\ref{item:thm:AR:3}) are equivalent. It seems that Definition~\ref{def:det}(\ref{item:def:1}) and Definition~\ref{def:det}(\ref{item:def:2}) are more symmetric than the conditions (REC1) and (REC2) in \cite[Definition~3.1]{ChenLe2015note}.
\end{remark}

\section*{Acknowledgements}

We are very grateful to the referee for many helpful suggestions.
The first author thanks his advisor Professor~Xiao-Wu Chen for his guidance and encouragement.
We thank Dawei Shen for helpful comments.
The work is supported by National Natural Science Foundation of China (Nos. 11571329 and 11671174).


\end{document}